\numberwithin{equation}{section}
\theoremstyle{plain}
\newtheorem{theorem}{Theorem}[section]
\newtheorem{proposition}[theorem]{Proposition}
\theoremstyle{definition} 
\newtheorem{definition}[theorem]{Definition}
\theoremstyle{remark} 
\newtheorem{remark}[theorem]{Remark}
\newcommand{\C}{\mathbb{C}}
\newcommand{\R}{\mathbb{R}}
\newcommand{\Proj}{\mathbb{P}}
\newcommand{\Op}{\ensuremath{\mathcal{O}}}
\DeclareMathOperator{\Spec}{Spec}
\DeclareMathOperator{\Spf}{Spf}
\begin{document}

\title{Local rigid cohomology of singular points}
\author{David Ouwehand}
\date{\today}
\maketitle
\begin{abstract}
\noindent
We show that if two singular points $x' \in X'$ and $x \in X$ on schemes over a field $k$ of characteristic $p > 0$ are contact equivalent then the rigid cohomology spaces $H^{\bullet}_{rig, \{x\}}(X)$ and $H^{\bullet}_{rig, \{x'\}}(X')$ are isomorphic. The isomorphism that we construct is moreover compatible with the Frobenius structure on rigid cohomology.
\end{abstract}

\section{Introduction and statement of results}

Throughout this paper we fix a perfect base field $k$ of characteristic $p > 0$. We also fix a Frobenius map $x \mapsto x^{p^r}$ on $k$, for some $r \geq 1$.

Let $X$ be a $k$-scheme together with a closed point $x \in X$. Now consider the space $H^i_{rig, \{x\}}(X)$, the rigid cohomology of $X$ with support in the closed set $\{x\}$. We will call this the \emph{local rigid cohomology} of $X$ at $x$. The aim of this paper is to show that local rigid cohomology behaves well with respect to contact equivalence. Indeed, as a direct consequence of our main theorem \ref{maintheorem} we will prove the following property.
\begin{theorem}
\label{theorem_constant_coeffs}
Let $x' \in X'$ and $x \in X$ be two closed points on $k$-schemes. Assume that these two points are contact equivalent. Then for all $i$ there exists an isomorphism
\[
H^i_{rig, \{x\}} \left( X \right) \xlongrightarrow{\sim} H^i_{rig, \{x'\}} \left( X' \right)
\]
on the local rigid cohomology. This isomorphism is moreover compatible with the Frobenius action on rigid cohomology.
\end{theorem}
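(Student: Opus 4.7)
The plan is to deduce this statement directly from Theorem \ref{maintheorem}. According to the definition of \cite{greuel}, two points $x \in X$ and $x' \in X'$ are contact equivalent precisely when their complete local rings are isomorphic as complete local $k$-algebras. My first step is therefore to translate the contact-equivalence hypothesis into an actual isomorphism
\[
\varphi : \widehat{\Op}_{X', x'} \xlongrightarrow{\sim} \widehat{\Op}_{X, x}.
\]

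Next, I would invoke Theorem \ref{maintheorem}. I expect its content to be that $H^{\bullet}_{\{x\}, rig}(X)$, equipped with its Frobenius action, depends functorially and only on the complete local ring $\widehat{\Op}_{X, x}$; in particular, any isomorphism of such rings induces a Frobenius-equivariant isomorphism of local rigid cohomology. Applying this functoriality to $\varphi$ immediately produces the desired map
\[
H^i_{rig, \{x\}}(X) \xlongrightarrow{\sim} H^i_{rig, \{x'\}}(X'),
\]
which is canonical once $\varphi$ has been fixed, and becomes canonical as stated in the abstract after the ambiguity in $\varphi$ is taken into account by functoriality.

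The genuine difficulty therefore sits inside Theorem \ref{maintheorem}, and the obstacle for the present corollary is really only to check that the main theorem is phrased in a form strong enough to apply. Two points need to be verified: first, that its invariance statement applies to an arbitrary abstract isomorphism $\varphi$ of complete local $k$-algebras and not only to those that extend to morphisms of schemes on some neighbourhood; and second, that the induced map on cohomology intertwines the Frobenius actions on both sides. If, as I would expect, Theorem \ref{maintheorem} produces a model of $H^{\bullet}_{\{x\}, rig}(X)$ built purely from $\widehat{\Op}_{X, x}$ (for instance via an associated dagger algebra or weak formal scheme), then both properties should be transparent, with Frobenius arising from the $p$-power map on the complete local ring. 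Should instead the main theorem be stated only for maps coming from ambient morphisms, the extra work would be to approximate $\varphi$ by an honest morphism of suitable formal/weak models and to verify that the induced cohomological map is independent of this approximation.
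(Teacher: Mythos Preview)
Your proposal rests on a mistaken guess about the content of Theorem~\ref{maintheorem}. That theorem is \emph{not} formulated in terms of the completed local ring; it is stated for the relation $(X',x')\succ(X,x)$ of Definition~\ref{def_strongly_equiv_points}, i.e.\ for an actual \'etale morphism $f\colon U_{x'}\to X$ with $f^{-1}(x)=\{x'\}$ and $k(x)\xrightarrow{\sim}k(x')$. It then asserts that the base change map $H^i_{rig,\{x\}}(X,\mathcal{F})\to H^i_{rig,\{x'\}}(X',f^*\mathcal{F})$ is a Frobenius-equivariant isomorphism. There is no intrinsic model of $H^\bullet_{\{x\},rig}(X)$ built from $\widehat{\Op}_{X,x}$ alone, and no functoriality statement for abstract isomorphisms of completions.

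Consequently the bridge between contact equivalence and Theorem~\ref{maintheorem} is not automatic: it is supplied by Artin's approximation theorem (Proposition~\ref{prop_equiv_points} and Proposition~\ref{prop_relations_comparison}), which converts an isomorphism $\widehat{\Op}_{X,x}\cong\widehat{\Op}_{X',x'}$ into a \emph{zigzag} of \'etale maps
\[
X \xleftarrow{\ f\ } X'' \xrightarrow{\ f'\ } X'
\]
through a third scheme $X''$ with $(X'',x'')\succ(X,x)$ and $(X'',x'')\succ(X',x')$. The paper then applies Theorem~\ref{maintheorem} twice, once to each leg, and uses that the pullbacks of the constant isocrystals $\Op_{X/K}$ and $\Op_{X'/K}$ to $X''$ coincide. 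Your final paragraph gestures toward approximating $\varphi$ by ``an honest morphism,'' but in general no single map $X'\to X$ (or $X\to X'$) realises the isomorphism of completions; the common \'etale neighbourhood $X''$ is essential, and identifying this step is the missing idea in your argument.
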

This implies in particular that the local rigid cohomology is only interesting at singular points.

The rest of the introduction is organized as follows. First we introduce some notations and we impose a few technical assumptions on the objects that we study. In paragraph \ref{paragraph_equivalence} we recall the notion of contact equivalence for schemes over arbitrary fields. We also recall an important reformulation of this definition that is due to Artin. In paragraph \ref{paragraph_base_change} we recall how the functoriality of rigid cohomology and its canonical Frobenius action are defined in terms of base change maps. After this we formulate our main theorem \ref{maintheorem}, which is a more refined version of theorem \ref{theorem_constant_coeffs}.

The proof of theorem \ref{maintheorem} will then be covered in section \ref{section_proof_main_theorem}.

\subsection{Conventions and notations}

Throughout this paper $K$ denotes a field of characteristic zero, complete w.r.t.\ a discrete valuation, with valuation ring $\mathcal{V}$ and perfect residue field $k$ of characteristic $p > 0$. We also assume that the Frobenius map $x \mapsto x^{p^r}$ on $k$ admits an isometric lift $\sigma \colon K \rightarrow K$. We fix such a lift throughout the remainder of this paper.

Every scheme $X$ will be assumed to be defined over $k$. We will tacitly assume all $k$-schemes to be reduced, of finite type and separated. Morphisms between $k$-schemes are assumed to be defined over $k$. Every closed subset $C \subset X$ will be equipped with the reduced subscheme structure.

Every formal scheme $P$ will be assumed to be separated and topologically of finite type over $\mathcal{V}$. For such a $P$ we may consider the generic fiber $P_K$, which is a quasi-compact separated rigid analytic space over $K$. See \cite{nicaise} for details.

Recall that a \emph{frame} is a series of immersions $(X \subset Y \subset P)$ where $X$ and $Y$ are schemes over $k$ and $P$ is a formal scheme over $\mathcal{V}$. The immersion $X \subset Y$ is required to be open and $Y \subset P$ is assumed to be a closed immersion of $Y$ into the closed fiber of $P$. We let $S$ denote the frame $(\Spec k \subset \Spec k \subset \Spf \mathcal{V})$. In this way, any frame $(X \subset Y \subset P)$ is a frame over $S$. Every morphism of frames will be assumed to be an $S$-morphism. A scheme $X$ is called \emph{realizable} if there exists a frame $(X \subset Y \subset P)$ with $Y$ proper and $P$ smooth in a neighbourhood of $X$. All quasi-projective schemes are obviously realizable. It is easy to show that for any morphism of realizable schemes $f \colon X' \rightarrow X$ there exists a morphism of frames
\begin{center}
\begin{tikzpicture}[descr/.style={fill=white,inner sep=2.5pt}]
\matrix(a)[matrix of math nodes, row sep=3em, column sep=2.5em, text height=1.5ex, text depth=0.25ex
]
{
   X' & Y' & P' \\
   X  & Y  & P \\
};
\path[->,font=\scriptsize] (a-1-1)
edge node[above]{}  (a-1-2);
\path[->,font=\scriptsize] (a-1-2)
edge node[above]{}  (a-1-3);
\path[->,font=\scriptsize] (a-2-1)
edge node[above]{}  (a-2-2);
\path[->,font=\scriptsize] (a-2-2)
edge node[above]{}  (a-2-3);
\path[->,font=\scriptsize] (a-1-2)
edge node[right]{}  (a-2-2);
\path[->,font=\scriptsize] (a-1-3)
edge node[right]{$u$}  (a-2-3);
\path[->,font=\scriptsize] (a-1-1)
edge node[right]{$f$}  (a-2-1);
\end{tikzpicture}
\end{center}
such that $(X' \subset Y' \subset P')$ resp.\ $(X \subset Y \subset P)$ is a realization of $X'$ resp.\ of $X$ and such that $u$ is smooth in a neighbourhood of $X'$. Such a morphism of frames is called a \emph{realization} of $f$. From now on we only consider realizable schemes. For the rest we will use the standard notation and terminology from rigid cohomology. Our main reference for this is \cite{lestum}.

\subsection{Equivalence of singularities}
\label{paragraph_equivalence}

Consider two points on $k$-schemes $x' \in X'$ and $x \in X$. We say that these two points are \emph{contact equivalent} if there exists an isomorphism $\widehat{\Op}_{X, x} \xlongrightarrow{\sim} \widehat{\Op}_{X', x'}$ on the completed local rings. We denote this by $(X', x') \sim_c (X, x)$. This definition of contact equivalence is valid over any base field. It is a well-known fact that over $\C$ one recovers the classical analytic definition. See \cite[Corollary 1.6]{artin2}.

In this paper we will need to use the following reformulation of contact equivalence. It is due to Artin.

\begin{proposition}
\label{prop_equiv_points}
Two points $x' \in X'$ and $x \in X$ are contact equivalent if and only if there exists another scheme $X''$ together with a point $x'' \in X''$ and two morphisms $f' \colon X'' \rightarrow X'$ and $f \colon X'' \rightarrow X$ such that:
\begin{enumerate}[label=\roman*)]
\item
$f'(x'') = x'$ and $f'$ induces an isomorphism $k(x') \xlongrightarrow{\sim} k(x'')$ on the residue fields.
\item
$f(x'') = x$ and $f$ induces an isomorphism $k(x) \xlongrightarrow{\sim} k(x'')$ on the residue fields.
\item
$f'$ and $f$ are \'{e}tale at $x''$.
\end{enumerate}
\end{proposition}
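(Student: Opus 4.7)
The proof splits into the two implications.

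$(\Leftarrow)$ Any \'etale morphism of schemes inducing an isomorphism on residue fields at a point induces an isomorphism on completed local rings, a standard consequence of formal \'etaleness. Applying this to $f$ and $f'$ and composing yields the required isomorphism $\widehat{\Op}_{X,x} \cong \widehat{\Op}_{X',x'}$.

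$(\Rightarrow)$ Set $A = \Op_{X,x}$ and $A' = \Op_{X',x'}$, with henselizations $A^h$ and $(A')^h$. These are excellent Henselian local $k$-algebras (since $k$ is perfect), hence Henselian $G$-rings for which Artin's approximation theorem holds. The given isomorphism $\phi \colon \widehat{A} \xrightarrow{\sim} \widehat{A'}$ is simultaneously an isomorphism $\widehat{A^h} \cong \widehat{(A')^h}$. Fix a presentation $A = k[u_1, \ldots, u_N]_{\mathfrak{m}}/I$. Then the composition $A \to \widehat{A} \xrightarrow{\phi} \widehat{A'} = \widehat{(A')^h}$ is a $k$-algebra homomorphism, i.e., a tuple in $\widehat{(A')^h}^N$ satisfying the polynomial relations coming from $I$. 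Applying Artin approximation to $(A')^h$ with accuracy $\mathfrak{m}^2$ produces a $k$-algebra map $\alpha \colon A \to (A')^h$ agreeing with $\phi$ modulo $\mathfrak{m}^2$, which extends uniquely through the henselization to $\alpha^h \colon A^h \to (A')^h$. Its completion $\widehat{\alpha^h}$ still agrees with $\phi$ modulo $\mathfrak{m}^2$, so $\phi^{-1} \circ \widehat{\alpha^h}$ is a local endomorphism of $\widehat{A}$ congruent to the identity modulo $\mathfrak{m}^2$; a short computation on the associated graded ring shows that any such endomorphism is the identity on every graded piece, hence an automorphism, so $\widehat{\alpha^h}$ itself is an isomorphism. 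Faithful flatness of completion for Noetherian local rings then forces $\alpha^h$ to be an isomorphism. The main obstacle is precisely this algebraization via Artin approximation; at its core it is the classical fact that two excellent Henselian local rings with isomorphic completions have isomorphic henselizations.

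Once $\psi := \alpha^h$ is in hand, we realize it at a finite level. Writing $A^h = \varinjlim_{C} C$ and $(A')^h = \varinjlim_{C'} C'$ as filtered colimits over local \'etale neighborhoods of $(X,x)$ and $(X',x')$ inducing isomorphisms on residue fields, a standard finite-presentation argument shows that both $\psi$ and $\psi^{-1}$ factor through some compatible pair $(C,C')$, and after a further common refinement yield an isomorphism $C \cong C'$. Taking $X''$ to be a scheme with distinguished closed point $x''$ whose local ring is $C \cong C'$ produces the required common \'etale neighborhood: the two structure morphisms $X'' \to X$ and $X'' \to X'$ are \'etale at $x''$, send $x''$ to $x$ and $x'$, and induce isomorphisms on residue fields, which is exactly the data asked for by the proposition.
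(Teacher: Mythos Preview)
Your argument is sound in outline and is, in fact, a reconstruction of Artin's own proof of the result the paper simply cites as \cite[Corollary 2.6]{artin}. The paper handles the forward implication in one line by invoking that reference; you have unpacked it, reaching the isomorphism $A^h \cong (A')^h$ via Artin approximation together with the associated-graded trick for endomorphisms congruent to the identity modulo $\mathfrak m^2$. For the backward implication both you and the paper argue through the henselizations in the same way.

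One step deserves tightening. In your last paragraph you assert that a ``standard finite-presentation argument'' produces an isomorphism $C \cong C'$ at some finite stage of the two filtered systems. As written this does not follow: factoring $\psi$ and $\psi^{-1}$ through finite stages only gives a zig-zag $C_i \to C'_j \to C_{i'}$ whose composites are the (non-trivial, merely \'etale) transition maps, not identities, so no isomorphism of finite stages drops out of pure colimit bookkeeping. The clean fix reuses the idea you already exploited for $\alpha^h$: once $\psi \colon A^h \xrightarrow{\sim} (A')^h$ is available, the induced map $A' \to A^h$ factors through some $C$ \'etale local over $A$ with trivial residue extension; the resulting map $A' \to C$ then induces the isomorphism $\widehat{A'} \xrightarrow{\sim} \widehat A = \widehat C$ on completions, hence is flat and unramified with trivial residue extension, i.e.\ \'etale. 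Taking $X''$ with $\Op_{X'',x''} = C$ gives the desired common \'etale neighbourhood directly, without needing to match up two separate finite stages.
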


\begin{proof}
For the ``if'' part of the proposition it suffices to observe that conditions i) through iii) imply that the Henselizations $\Op_{X, x}^h$ and $\Op_{X', x'}^h$ are isomorphic. Indeed, another way to formulate the conditions of the proposition is that $X''$ is a neighbourhood of $x$ in the \'etale site $X_{\acute{e}t}$ as well as a neighbourhood of $x'$ in the \'etale site $X'_{\acute{e}t}$. Hence the local rings of $x$ and $x'$ w.r.t.\ the \'etale site are isomorphic. The isomorphism $\Op_{X, x}^h \cong \Op_{X', x'}^h$ implies that the completions of the local rings are isomorphic as well. 

The ``only if'' part is proved in \cite[Corollary 2.6]{artin}.
\end{proof}

In the statement of our main theorem we will use a reformulation of the \'etale characterization of contact equivalence. For this we make the following definition.

\begin{definition}
\label{def_strongly_equiv_points}
Consider two points on $k$-schemes $x' \in X'$ and $x \in X$. We will write $(X', x') \succ (X, x)$ if there exists an open neighbourhood $U_{x'}$ of $x'$ and a morphism $f \colon U_{x'} \rightarrow X$ such that:
\begin{enumerate}[label=\roman*)]
\item
$f(x') = x$ and $f$ induces an isomorphism $k(x) \xlongrightarrow{\sim} k(x')$ on the residue fields.
\item
$f$ is \'{e}tale at $x'$.
\item
$f^{-1}(x) = \{x'\}$.
\end{enumerate}
\end{definition}

By proposition \ref{prop_equiv_points} it is obvious that two closed points $x' \in X'$ and $x \in X$ are contact equivalent if and only if there exists a pair $(X'', x'')$ such that $(X'', x'') \succ (X', x')$ and $(X'', x'') \succ (X, x)$.

\subsection{Base change maps, functoriality and Frobenius}
\label{paragraph_base_change}

Before we state our main theorem \ref{maintheorem} we will review the \emph{base change maps} that are needed to understand how rigid cohomology behaves w.r.t.\ morphisms of schemes. The main reason for doing so is to introduce some convenient notation for section \ref{section_proof_main_theorem}. We also use a slightly different presentation than in \cite{lestum}. The reader can safely skip this paragraph and refer back to it later if needed.

Consider a commutative diagram of rigid analytic spaces
\begin{center}
\begin{tikzpicture}[descr/.style={fill=white,inner sep=2.5pt}]
\matrix(a)[matrix of math nodes, row sep=3em, column sep=2.5em, text height=1.5ex, text depth=0.25ex
]
{
   V' & W' \\
   V  & W  \\
};
\path[->,font=\scriptsize] (a-1-1)
edge node[above]{$\alpha'$}  (a-1-2);
\path[->,font=\scriptsize] (a-2-1)
edge node[above]{$\alpha$}  (a-2-2);
\path[->,font=\scriptsize] (a-1-2)
edge node[right]{$\beta$}  (a-2-2);
\path[->,font=\scriptsize] (a-1-1)
edge node[right]{$\beta'$}  (a-2-1);
\end{tikzpicture}
\end{center}
where $\beta$ and $\beta'$ are flat. Let $E$ be an $\Op_V$-module. Then there is a canonical \emph{base change map}
\begin{equation}
\label{equation_basic_bcm}
\beta^* \left( \mathbb{R} \alpha_* \right) E \longrightarrow \left( \mathbb{R} \alpha'_* \right) (\beta')^* E .
\end{equation}
For the construction of this base change map we refer to \cite[Tag 02N6]{stacks-project} or to paragraph XVII.2 of \cite{sga}. Also note that by the flatness assumption on $\beta$ and $\beta'$ we do not need to consider the left derived functors $\mathbb{L} \beta^*$ and $\mathbb{L} (\beta')^*$.

The map (\ref{equation_basic_bcm}) is the starting point for the definition of the base change map of rigid cohomology. Let
\begin{equation}
\label{equation_bcm_morphism_frames}
\begin{tikzpicture}[descr/.style={fill=white,inner sep=2.5pt}, baseline=(current  bounding  box.center)]
\matrix(a)[matrix of math nodes, row sep=3em, column sep=2.5em, text height=1.5ex, text depth=0.25ex
]
{
 X' & Y' & P' \\
 X & Y & P \\
};
\path[->,font=\scriptsize] (a-1-1)
edge node[above]{}  (a-1-2);
\path[->,font=\scriptsize] (a-1-2)
edge node[above]{}  (a-1-3);
\path[->,font=\scriptsize] (a-2-1)
edge node[above]{}  (a-2-2);
\path[->,font=\scriptsize] (a-2-2)
edge node[above]{}  (a-2-3);
\path[->,font=\scriptsize] (a-1-1)
edge node[right]{$f$}  (a-2-1);
\path[->,font=\scriptsize] (a-1-2)
edge node[right]{}  (a-2-2);
\path[->,font=\scriptsize] (a-1-3)
edge node[right]{$u$}  (a-2-3);
\end{tikzpicture}
\end{equation}
be a flat morphism of frames. Also choose two closed subschemes $C' \subset X'$ and $C \subset X$ such that $f^{-1}(C) \subset C'$. Let $E$ be a $j_X^{\dagger} \Op_{]Y[_P}$-module with an integrable connection over $K$. 

Since (\ref{equation_bcm_morphism_frames}) is flat we know by \cite[Corollary 3.3.6]{lestum} that there exists a strict neighbourhood $V'$ of $]X'[_{P'}$ in $]Y'[_{P'}$ such that the morphism $u_K \colon ]Y'[_{P'} \rightarrow ]Y[_{P}$ is flat on $V'$. It follows from \cite[Proposition 6.2.2]{lestum} that restricting to $V'$ has no effect on cohomology, therefore we may work as if $u_K$ were flat. We can then apply the base change map (\ref{equation_basic_bcm}) coming from the diagram 
\begin{center}
\begin{tikzpicture}[descr/.style={fill=white,inner sep=2.5pt}]
\matrix(a)[matrix of math nodes, row sep=3em, column sep=2.5em, text height=1.5ex, text depth=0.25ex
]
{
   ]Y'[_{P'} & ]Y[_P \\
   ]Y[_{P} & ]Y[_P \\
};
\path[->,font=\scriptsize] (a-1-1)
edge node[above]{$u_K$}  (a-1-2);
\path[->,font=\scriptsize] (a-2-1)
edge node[above]{$\mbox{Id}$}  (a-2-2);
\path[->,font=\scriptsize] (a-1-2)
edge node[right]{$\mbox{Id}$}  (a-2-2);
\path[->,font=\scriptsize] (a-1-1)
edge node[right]{$u_K$}  (a-2-1);
\end{tikzpicture}
\end{center}
to the de Rham complex $\underline{\Gamma}^{\dagger}_{C} E \otimes_{\Op_{]Y[_P}} \Omega^{\bullet}_{]Y[_P / K}$. In this way we obtain a morphism
\[
(u \star)_1 \colon \underline{\Gamma}^{\dagger}_{C} E \otimes_{\Op_{]Y[_P}} \Omega^{\bullet}_{]Y[_P / K} \longrightarrow \left( \R u_{K *} \right) u_K^{*} \left( \underline{\Gamma}^{\dagger}_{C} E \otimes_{\Op_{]Y[_P}} \Omega^{\bullet}_{]Y[_P / K} \right).
\]
By applying the morphism of functors $\mbox{Id} \rightarrow j_{X'}^{\dagger}$ to the pullback of the de Rham complex we obtain another morphism
\begin{equation*}
\begin{split}
(u \star)_2 \colon \left( \R u_{K *} \right) u_K^{*} \mathopen{}\left( \underline{\Gamma}^{\dagger}_{C} E \otimes_{\Op_{]Y[_P}} \right.\mathclose{} & \mathopen{}\left. \Omega^{\bullet}_{]Y[_P / K} \right)\mathclose{} \\[1ex]
& \longrightarrow \left( \R u_{K *} \right) u^{\dagger} \left( \underline{\Gamma}^{\dagger}_{C} E \otimes_{\Op_{]Y[_P}} \Omega^{\bullet}_{]Y[_P / K} \right).
\end{split}
\end{equation*}
We also have a map
\begin{equation}
\label{equation_map_de_rham}
u_K^{*} \left( \underline{\Gamma}^{\dagger}_{C} E \otimes_{\Op_{]Y[_P}} \Omega^{\bullet}_{]Y[_P / K} \right) \longrightarrow u_K^{*} \underline{\Gamma}^{\dagger}_{C} E \otimes_{\Op_{]Y'[_{P'}}} \Omega^{\bullet}_{]Y'[_{P'} / K}
\end{equation}
from the pullback of the de Rham complex to the de Rham complex of the pullback. This map can easily be constructed by combining the canonical arrow $u_K^{*} \Omega^1_{]Y[_P} \rightarrow \Omega^1_{]Y'[_{P'}}$ with the definition of the de Rham complex. See for example to introduction of \cite{katz_oda}. From (\ref{equation_map_de_rham}) we find another map
\begin{equation*}
\begin{split}
(u \star)_3 \colon \left( \R u_{K *} \right) u^{\dagger} \mathopen{}\left( \underline{\Gamma}^{\dagger}_{C} E \otimes_{\Op_{]Y[_P}} \right.\mathclose{} & \mathopen{}\left. \Omega^{\bullet}_{]Y[_P / K} \right)\mathclose{} \\[1ex]
& \longrightarrow \left( \R u_{K *} \right) u^{\dagger} \underline{\Gamma}^{\dagger}_{C} E \otimes_{\Op_{]Y'[_{P'}}} \Omega^{\bullet}_{]Y'[_{P'} / K}.
\end{split}
\end{equation*}
Finally we have a canonical map $u^{\dagger} \underline{\Gamma}^{\dagger}_{C} E \longrightarrow \underline{\Gamma}^{\dagger}_{C'} u^{\dagger} E$, which we will study more in paragraph \ref{paragraph_canonical}. From this map we obtain a morphism
\[
(u \star)_4 \colon \left( \R u_{K *} \right) u^{\dagger} \underline{\Gamma}^{\dagger}_{C} E \otimes_{\Op_{]Y'[_{P'}}} \Omega^{\bullet}_{]Y'[_{P'} / K} \longrightarrow \left( \R u_{K *} \right) \underline{\Gamma}^{\dagger}_{C'} u^{\dagger} E \otimes_{\Op_{]Y'[_{P'}}} \Omega^{\bullet}_{]Y'[_{P'} / K}.
\]
\begin{definition}
\label{definition_base_change}
The canonical map
\begin{equation}
\label{equation_base_change}
u \star \colon \underline{\Gamma}^{\dagger}_{C} E \otimes_{\Op_{]Y[_P}} \Omega^{\bullet}_{]Y[_P / K} \longrightarrow \left( \R u_{K *} \right) \underline{\Gamma}^{\dagger}_{C'} u^{\dagger} E \otimes_{\Op_{]Y'[_{P'}}} \Omega^{\bullet}_{]Y'[_{P'} / K}
\end{equation}
that is given by the composition
\[
u \star = (u \star)_4 \circ (u \star)_3 \circ (u \star)_2 \circ (u \star)_1
\]
is called the \emph{base change map of rigid cohomology} (with respect to the morphism of frames (\ref{equation_bcm_morphism_frames}) and the module $E$).
\end{definition}
Note that our definition of the base change map is slightly different from the definition that can be found in \cite[Proposition 6.2.6]{lestum}. Indeed, that definition first applies (\ref{equation_map_de_rham}) and then $\mbox{Id} \rightarrow j_{X'}^{\dagger}$. It is easy to verify that these definitions amount to the same thing: just write out the compatibility condition for the natural transformation $\mbox{Id} \rightarrow j_{X'}^{\dagger}$.

The compatibility of rigid cohomology w.r.t.\ morphisms can easily be understood in terms of base change maps. Indeed, consider a morphism of $k$-schemes as in the commutative diagram below:
\begin{center}
\begin{tikzpicture}[descr/.style={fill=white,inner sep=2.5pt}]
\matrix(a)[matrix of math nodes, row sep=3em, column sep=1.5em, text height=1.5ex, text depth=0.25ex
]
{
   X' &   & X \\
      & \Spec k &   \\
};
\path[->,font=\scriptsize] (a-1-1)
edge node[above]{$f$}  (a-1-3);
\path[->,font=\scriptsize] (a-1-1)
edge node[left]{$p'$}  (a-2-2);
\path[->,font=\scriptsize] (a-1-3)
edge node[right]{$p$}  (a-2-2);
\end{tikzpicture}
\end{center}
Also assume that the morphism of frames (\ref{equation_bcm_morphism_frames}) is a realization of $f$. Now consider an overconvergent $F$-isocrystal $\mathcal{F}$ on $X$ and let $E$ denote the realization of $\mathcal{F}$ on $(X \subset Y \subset P)$. Then $u^{\dagger} E$ is the realization of $f^* \mathcal{F}$ on $(X' \subset Y' \subset P')$. The \emph{canonical map}
\begin{equation}
\label{equation_canonical_map_rig_coh}
H^i_{rig, C} \left( X, \mathcal{F} \right) \xlongrightarrow{} H^i_{rig, C'} \left( X', f^* \mathcal{F} \right)
\end{equation}
on rigid cohomology is obtained by applying the derived pushforward functor $\R p_{K *}$ to the base change map (\ref{equation_base_change}) and then taking the $i$-th cohomology. For $\mathcal{F} = \Op_{X / K}$ this map expresses the following fact: rigid cohomology with constant coefficients is a functor.

It is also a classical fact that the canonical map (\ref{equation_canonical_map_rig_coh}) is compatible with the Frobenius action on rigid cohomology. To see this, let $F_{X'}$ and $F_X$ denote the absolute Frobenius on $X'$ resp.\ on $X$. Also let $\Phi \colon F_X^* \mathcal{F} \xlongrightarrow{\sim} \mathcal{F}$ denote the Frobenius structure on $\mathcal{F}$, where the Frobenius pullback functor is defined as in \cite[Definition 8.3.1]{lestum}. Now consider the following diagram:
\begin{center}
\begin{tikzpicture}[descr/.style={fill=white,inner sep=2.5pt}]
\matrix(a)[matrix of math nodes, row sep=3em, column sep=2.5em, text height=1.5ex, text depth=0.25ex
]
{
   \mathbb{R} p_{rig, C} \mathcal{F} & \mathbb{R} p_{rig, C} F_X^* \mathcal{F} & \mathbb{R} p_{rig, C} \mathcal{F} \\
   \mathbb{R} p'_{rig, C'} f^* \mathcal{F}  & \mathbb{R} p'_{rig, C'} F_{X'}^* f^* \mathcal{F} & \mathbb{R} p'_{rig, C'} f^* \mathcal{F} \\
};
\path[->,font=\scriptsize] (a-1-1)
edge node[above]{}  (a-1-2);
\path[->,font=\scriptsize] (a-1-2)
edge node[above]{$\Phi$}  (a-1-3);
\path[->,font=\scriptsize] (a-2-1)
edge node[above]{}  (a-2-2);
\path[->,font=\scriptsize] (a-2-2)
edge node[above]{$f^* \Phi$}  (a-2-3);
\path[->,font=\scriptsize] (a-1-2)
edge node[right]{}  (a-2-2);
\path[->,font=\scriptsize] (a-1-3)
edge node[right]{}  (a-2-3);
\path[->,font=\scriptsize] (a-1-1)
edge node[right]{}  (a-2-1);
\end{tikzpicture}
\end{center}
The rows of this diagram describe the Frobenius actions on $\mathbb{R} p_{rig, C} \mathcal{F}$ and on $\mathbb{R} p'_{rig, C'} f^* \mathcal{F}$. The vertical arrows all come from the base change map of the pullback along $f$. We have to check that this diagram is commutative. All the arrows of the leftmost square are essentially instances of the base change map (\ref{equation_base_change}). This square commutes because of the identity $f \circ F_{X'} = F_X \circ f$. The square on the right commutes because the base change map is compatible with morphisms of overconvergent isocrystals.

\subsection{Statement of the main theorem}

With the material of paragraphs \ref{paragraph_equivalence} and \ref{paragraph_base_change} we are now ready to formulate our main theorem, which is about rigid cohomology with general coefficients.
\begin{theorem}
\label{maintheorem}
Let $(X', x')$ and $(X, x)$ be two $k$-schemes with marked closed points such that $(X', x') \succ (X, x)$ via $f \colon U_{x'} \rightarrow X \colon x' \mapsto x$. Let $\mathcal{F} \in \mbox{F-Isoc}^{\dagger} \left( X / S \right)$ be a finitely presented overconvergent F-isocrystal on $X$. Then for every $i \geq 0$ the canonical map on rigid cohomology 
\[
H^i_{rig, \{x\}} \left( X, \mathcal{F} \right) \xlongrightarrow{} H^i_{rig, \{x'\}} \left( X', f^* \mathcal{F} \right)
\]
is an isomorphism.
\end{theorem}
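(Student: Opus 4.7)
The plan is to reduce the statement to a local comparison of rigid analytic tubes at $x$ and $x'$, and then to exploit the fact that local rigid cohomology with support at a closed point is computed by a local sections functor on such a tube. By excision for local rigid cohomology we may first replace $X'$ by the open subscheme $U_{x'}$, since $\{x'\}$ remains closed in $U_{x'}$. Thus it suffices to produce a Frobenius-equivariant, functorial isomorphism
\[
H^i_{rig, \{x\}}(X, \mathcal{F}) \xlongrightarrow{\sim} H^i_{rig, \{x'\}}(U_{x'}, f^*\mathcal{F}).
\]

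Next, I would choose a realization $(X \subset Y \subset P)$ of $X$. Using the classical fact that \'{e}tale morphisms of $k$-schemes lift uniquely to \'{e}tale morphisms of $\mathcal{V}$-formal schemes, I would construct a frame $(U_{x'} \subset Y' \subset P')$ realizing $U_{x'}$, together with a morphism of frames $u \colon P' \to P$ that is \'{e}tale in a formal neighborhood of $x'$. Properness of $Y'$ can be arranged by a Nagata-style compactification away from $x'$, which does not affect the tube at $x'$. The key geometric consequence is that the hypotheses $k(x) \xlongrightarrow{\sim} k(x')$, $f^{-1}(x) = \{x'\}$, and the \'{e}taleness of $u$ at $x'$ combine to give an isomorphism of completed local rings $\widehat{\Op}_{P, x} \xlongrightarrow{\sim} \widehat{\Op}_{P', x'}$. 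Passing to rigid generic fibers yields an isomorphism of tubes $u_K \colon\,]x'[_{P'} \xlongrightarrow{\sim} \,]x[_P$, as well as an isomorphism between cofinal systems of strict neighborhoods of these two tubes inside $]Y'[_{P'}$ and $]Y[_P$.

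To conclude, I would interpret $H^i_{rig, \{x\}}(X, \mathcal{F})$ as the hypercohomology of the local sections functor $R\underline{\Gamma}^{\dagger}_{]x[}$ applied to a realization of $\mathcal{F}$ on a cofinal system of strict neighborhoods of $]X[_P$. Because this construction is purely local on the germ of strict neighborhoods around the tube, the preceding step transports it to the corresponding construction on the $P'$-side. Frobenius equivariance then follows by choosing a Frobenius lift on $P$ and invoking uniqueness of \'{e}tale lifts to produce a compatible Frobenius lift on $P'$, and functoriality in $\mathcal{F}$ is manifest from the construction. I expect the main technical obstacle to be verifying that the realization of $f^*\mathcal{F}$ on strict neighborhoods of $]U_{x'}[_{P'}$ is indeed identified, under $u_K$, with the pullback of the realization of $\mathcal{F}$ — that is, that the isomorphism of tubes enhances naturally to an isomorphism of the full overconvergent isocrystal data, and not merely of the underlying coherent sheaves with connection.
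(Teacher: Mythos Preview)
Your geometric intuition is correct and matches the heart of the paper's argument: the hypotheses $f^{-1}(x) = \{x'\}$, $k(x) \cong k(x')$, and \'etaleness near $x'$ do force an isomorphism of tubes $]x'[_{P'} \xrightarrow{\sim}\, ]x[_P$ (this is exactly how \cite[Proposition~2.3.15]{lestum} enters in the paper's Proposition~\ref{prop_maintheorem_proof_prelim}). However, two steps in your outline have genuine gaps.

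First, the \'etale lifting. The ``classical fact that \'etale morphisms of $k$-schemes lift uniquely to \'etale morphisms of $\mathcal{V}$-formal schemes'' applies when the target already has a smooth formal lift; here $X$ may be singular, and $f \colon U_{x'} \to X$ is not something one can lift directly. What the paper does (Proposition~\ref{reductions}) is work inside the ambient smooth $P$: after a blow-up to make $Y' \to Y$ projective, it uses that $f$ is a local complete intersection to factor $X' \hookrightarrow \Proj^N_P$ through a regular immersion and then carve out $P'$ \'etale over $P$ as in \cite[Lemma~6.5.1]{lestum}. This \'etale realization is only local on $X'$ and must afterwards be glued with the general proper smooth case via a fiber-product argument.

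Second, and more seriously, your passage from the tube isomorphism to the cohomology isomorphism is where essentially all of the work lies. The support functor $\underline{\Gamma}^\dagger_{\{x\}}$ is applied to a $j^\dagger_X \Op_{]Y[_P}$-module, and the $j^\dagger_X$-structure references strict neighborhoods of $]X[_P$ in $]Y[_P$, not of $]x[_P$; your sentence ``this construction is purely local on the germ of strict neighborhoods around the tube'' is precisely the statement to be proved. The paper makes it precise by showing that the \emph{base change map} of rigid cohomology is an isomorphism, which requires auxiliary level-$\eta$ functors $j^\dagger_{X,\eta}$ and $\underline{\Gamma}^\dagger_{C,\eta}$ (Propositions~\ref{prop_functors_basic} and~\ref{prop_functors_morphisms}), commutation of $\varinjlim_\eta$ with $\mathbb{R}u_{K*}$ under a quasi-compactness hypothesis, and a passage to the limit. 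Without identifying your isomorphism as the base change map, Frobenius equivariance does not come for free: the paper deduces it from naturality of base change under the absolute Frobenius $F_X$ (see the diagram at the end of \S\ref{paragraph_proof_0}), not by lifting Frobenius to $P$ and $P'$ --- a route that would itself require justifying compatibility with $u$ and is unnecessary here.
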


\begin{remark}
~\begin{enumerate}[label=\roman*)]
\item
In the statement of the theorem we have implicitly chosen an extension of $f^* \mathcal{F}$ to all of $X'$. The choice of the extension is not important, since by \cite[Proposition 8.2.8]{lestum} the local rigid cohomology only depends on an open neighbourhood of the support.
\item
In the statement of theorem \ref{maintheorem} it is important that $x'$ and $x$ are closed points, otherwise the cohomology with support does not make sense. This is not a problem if one only considers \emph{isolated singularities}, i.e.\ if one assumes that the singular loci $X_{sing}$ and $X'_{sing}$ are zero-dimensional at $x$ resp.\ at $x'$. Indeed, the singular locus of a scheme is closed under specialization; see for example \cite[Lemma 2.4.11.(b)]{liu}. So for quasi-compact schemes, every isolated singularity is a closed point.
\item
Recall that in the definition of the rigid cohomology with constant coefficients of a scheme $X$ one starts by choosing a realization $(X \subset Y \subset P)$. In order to show that $H^i_{rig}(X)$ is independent of the choice of the realization, one can prove that every diagram
\begin{center}
\begin{tikzpicture}[descr/.style={fill=white,inner sep=2.5pt}]
\matrix(a)[matrix of math nodes, row sep=3em, column sep=2.5em, text height=1.5ex, text depth=0.25ex
]
{
   X' = X & Y' & P' \\
   X  & Y  & P \\
};
\path[->,font=\scriptsize] (a-1-1)
edge node[above]{}  (a-1-2);
\path[->,font=\scriptsize] (a-1-2)
edge node[above]{}  (a-1-3);
\path[->,font=\scriptsize] (a-2-1)
edge node[above]{}  (a-2-2);
\path[->,font=\scriptsize] (a-2-2)
edge node[above]{}  (a-2-3);
\path[->,font=\scriptsize] (a-1-2)
edge node[right]{$g$}  (a-2-2);
\path[->,font=\scriptsize] (a-1-3)
edge node[right]{$u$}  (a-2-3);
\path[->,font=\scriptsize] (a-1-1)
edge node[right]{$f = \mbox{Id}_X$}  (a-2-1);
\end{tikzpicture}
\end{center}
with $g$ proper and $u$ smooth in a neighbourhood of $X$ induces an isomorphism on the cohomology. This is done in \cite[Proposition 6.5.3]{lestum}. More specifically, it is the base change map (\ref{equation_base_change}) that induces the isomorphism. Our approach for theorem \ref{maintheorem} is to prove a local version of this result in the case where $f$ is an \'{e}tale morphism rather than the identity map on $X$. See proposition \ref{theorem_base_change_main} for more details.
\end{enumerate}
\end{remark}

We now show that theorem \ref{theorem_constant_coeffs} is just a special case of our main theorem.

\begin{proof}[Proof of Theorem \ref{theorem_constant_coeffs}]
Let $\Op_{X' / K}$ resp.\ $\Op_{X / K}$ denote the constant F-isocrystal on $X'$ resp.\ on $X$. Now choose two morphisms $f' \colon X'' \rightarrow X'$ and $f \colon X'' \rightarrow X$ that satisfy the conditions from proposition \ref{prop_equiv_points}. Then use theorem \ref{maintheorem} and the fact that $(f')^* \Op_{X' / K} = f^* \Op_{X / K} = \Op_{X'' / K}$ to obtain an isomorphism
\begin{equation}
\label{equation_proof_thm_constant_coeffs}
H^i_{rig, \{x\}} \left( X \right) \xlongrightarrow{\sim} H^i_{rig, \{x'\}} \left( X' \right).
\end{equation}   
We have explained in paragraph \ref{paragraph_base_change} that the canonical map on rigid cohomology is Frobenius-equivariant. Therefore the isomorphism (\ref{equation_proof_thm_constant_coeffs}) is Frobenius-equivariant as well.
\end{proof}

We end this paragraph with a few more remarks.

\begin{remark}
Theorem \ref{maintheorem} can be seen as an analogue of a well-known result of \'etale cohomology. Indeed, the corresponding statement for the cohomology of an \'etale sheaf is a special case of the excision theorem \cite[Proposition III.1.27]{milne}. Combining this with proposition \ref{prop_equiv_points}, we see that theorem \ref{theorem_constant_coeffs} also has an analogous statement for $\ell$-adic cohomology.
\end{remark}

\begin{remark}
Fr\'ed\'eric D\'eglise has pointed out that theorem \ref{theorem_constant_coeffs} can be proved in a more general setting, using the theory of motives. See his remarks in \cite{deglise}.
\end{remark}

\section{Proof of the main theorem}
\label{section_proof_main_theorem}

This section contains the proof of our main theorem \ref{maintheorem}. First we recall the definition of the canonical map on sheaves with supports. We show that this map is an isomorphism under certain conditions. After this we reformulate our main theorem \ref{maintheorem} in terms of base change maps. We then finish the proof in the last two paragraphs.

\subsection{The canonical map on sheaves with supports}
\label{paragraph_canonical}

In this paragraph we give more details about the map $(u \star)_4$ from definition \ref{definition_base_change}. We show that this map is an isomorphism under a fairly general assumption.

Consider a morphism of frames
\begin{center}
\begin{tikzpicture}[descr/.style={fill=white,inner sep=2.5pt}]
\matrix(a)[matrix of math nodes, row sep=3em, column sep=2.5em, text height=1.5ex, text depth=0.25ex
]
{
 X' & Y' & P' \\
 X & Y & P \\
};
\path[->,font=\scriptsize] (a-1-1)
edge node[above]{}  (a-1-2);
\path[->,font=\scriptsize] (a-1-2)
edge node[above]{}  (a-1-3);
\path[->,font=\scriptsize] (a-2-1)
edge node[above]{}  (a-2-2);
\path[->,font=\scriptsize] (a-2-2)
edge node[above]{}  (a-2-3);
\path[->,font=\scriptsize] (a-1-1)
edge node[right]{}  (a-2-1);
\path[->,font=\scriptsize] (a-1-2)
edge node[right]{}  (a-2-2);
\path[->,font=\scriptsize] (a-1-3)
edge node[right]{$u$}  (a-2-3);
\end{tikzpicture}
\end{center}
Recall that if $E$ is an $\Op_{]Y[_P}$-module then there is a canonical map 
\[
u_K^* j_X^{\dagger} E \longrightarrow j_{X'}^{\dagger} u_K^* E
\]
where $u_K \colon ]Y'[_{P'} \rightarrow ]Y[_P$ denotes the morphism on tubes that one gets by restriction from the morphism $P'_K \rightarrow P_K$. This map is an isomorphism if the morphism of frames is Cartesian. See \cite[Corollary 5.3.9]{lestum} for more details. We briefly recall how this map can be used to define a canonical map on sheaves with supports. This construction can also be found in \cite[Corollary 5.3.10]{lestum}. Let $C' \subset X'$ and $C \subset X$ be closed subschemes such that $f^{-1}(C) \subset C'$. Define $U' = X' \setminus C'$ and $U = X \setminus C$. Then we obtain the following commutative diagram with exact rows:
\begin{equation}
\label{diagram_map_on_kernels}
\begin{tikzpicture}[descr/.style={fill=white,inner sep=2.5pt}, baseline=(current  bounding  box.center)]
\matrix(a)[matrix of math nodes, row sep=3em, column sep=2.5em, text height=1.5ex, text depth=0.25ex
]
{
   & u_K^* \underline{\Gamma}^{\dagger}_{C} j_X^{\dagger} E & u_K^* j_X^{\dagger} E       & u_K^* j_U^{\dagger} E    & 0 \\
0  & \underline{\Gamma}^{\dagger}_{C'} j_{X'}^{\dagger} u_K^* E & j_{X'}^{\dagger} u_K^* E & j_{U'}^{\dagger} u_K^* E & 0 \\
};

\path[->,font=\scriptsize] (a-1-2)
edge node[above]{}  (a-1-3);
\path[->,font=\scriptsize] (a-1-3)
edge node[above]{}  (a-1-4);
\path[->,font=\scriptsize] (a-1-4)
edge node[above]{}  (a-1-5);

\path[->,font=\scriptsize] (a-2-1)
edge node[above]{}  (a-2-2);
\path[->,font=\scriptsize] (a-2-2)
edge node[above]{}  (a-2-3);
\path[->,font=\scriptsize] (a-2-3)
edge node[above]{}  (a-2-4);
\path[->,font=\scriptsize] (a-2-4)
edge node[above]{}  (a-2-5);

\path[->,font=\scriptsize] (a-1-3)
edge node[above]{}  (a-2-3);
\path[->,font=\scriptsize] (a-1-4)
edge node[above]{}  (a-2-4);
\end{tikzpicture}
\end{equation}

By the universal property of the kernel we now obtain a canonical morphism
\begin{equation}
\label{equation_can_map_1}
u_K^* \underline{\Gamma}^{\dagger}_{C} j_X^{\dagger} E \longrightarrow \underline{\Gamma}^{\dagger}_{C'} j_{X'}^{\dagger} u_K^* E.
\end{equation}
In the case where $E$ is a $j_X^{\dagger} \Op_{]Y[_P}$-module, composing with $j_{X'}^{\dagger}$ also gives a canonical map
\begin{equation}
\label{equation_can_map_2}
u^{\dagger} \underline{\Gamma}^{\dagger}_{C} E \longrightarrow \underline{\Gamma}^{\dagger}_{C'} u^{\dagger} E.
\end{equation}
The first step towards proving the main theorem \ref{maintheorem} is to show that the canonical map on sheaves with supports is an isomorphism if the morphism of frames is flat and if the supports are Cartesian.

\begin{proposition}
\label{supports_lemma1}
Let
\begin{center}
\begin{tikzpicture}[descr/.style={fill=white,inner sep=2.5pt}]
\matrix(a)[matrix of math nodes, row sep=3em, column sep=2.5em, text height=1.5ex, text depth=0.25ex
]
{
 X' & Y' & P' \\
 X & Y & P \\
};
\path[->,font=\scriptsize] (a-1-1)
edge node[above]{}  (a-1-2);
\path[->,font=\scriptsize] (a-1-2)
edge node[above]{}  (a-1-3);
\path[->,font=\scriptsize] (a-2-1)
edge node[above]{}  (a-2-2);
\path[->,font=\scriptsize] (a-2-2)
edge node[above]{}  (a-2-3);
\path[->,font=\scriptsize] (a-1-1)
edge node[right]{}  (a-2-1);
\path[->,font=\scriptsize] (a-1-2)
edge node[right]{}  (a-2-2);
\path[->,font=\scriptsize] (a-1-3)
edge node[right]{$u$}  (a-2-3);
\end{tikzpicture}
\end{center}
be a flat morphism of frames. Let $E$ be a $j_X^{\dagger} \Op_{]Y[_P}$-module. Also choose two closed subschemes $C' \subset X'$ and $C \subset X$ such that $C' = C \times_X X'$. Then the canonical map (\ref{equation_can_map_2}) is an isomorphism.
\end{proposition}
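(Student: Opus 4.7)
The strategy is to apply the five-lemma to two short exact sequences of localization. Set $U = X \setminus C$ and $U' = X' \setminus C'$. From the hypothesis $C' = C \times_X X'$ one immediately checks that $U' = f^{-1}(U)$, where $f \colon X' \to X$ is the induced morphism of schemes, so the two open complements are compatible under $u$.

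First, I would observe that $u^{\dagger} = j_{X'}^{\dagger} \circ u_K^*$ is an exact functor: flatness of $u$ makes $u_K^*$ exact, and $j_{X'}^{\dagger}$ is always exact. Applying $u^{\dagger}$ to the localization short exact sequence
\[
0 \to \underline{\Gamma}^{\dagger}_C E \to E \to j_U^{\dagger} E \to 0
\]
(where we used $j_X^{\dagger} E = E$ since $E$ is a $j_X^{\dagger}\Op$-module) yields an exact top row, which I want to compare with the localization sequence for $u^{\dagger}E$ with support $C'$:
\[
0 \to \underline{\Gamma}^{\dagger}_{C'} u^{\dagger} E \to u^{\dagger} E \to j_{U'}^{\dagger} u^{\dagger} E \to 0.
\]
The two sequences fit into a commutative ladder whose middle vertical arrow is the identity on $u^{\dagger}E$, whose left vertical arrow is the canonical map (\ref{equation_can_map_2}), and whose right vertical arrow is the canonical map $\psi \colon u^{\dagger} j_U^{\dagger} E \to j_{U'}^{\dagger} u^{\dagger} E$. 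By the five-lemma, showing that (\ref{equation_can_map_2}) is an isomorphism reduces to showing that $\psi$ is an isomorphism.

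Using $j_{U'}^{\dagger} j_{X'}^{\dagger} = j_{U'}^{\dagger}$ (since $U' \subset X'$), the map $\psi$ unwinds to the canonical morphism
\[
j_{X'}^{\dagger} u_K^* j_U^{\dagger} E \longrightarrow j_{U'}^{\dagger} u_K^* E.
\]
I would prove this is an isomorphism by exploiting the hypothesis $U' = U \times_X X'$ together with the flatness of $u$: these imply that on a sufficiently small strict neighborhood $V$ of $]X'[_{P'}$ in $]Y'[_{P'}$, the analytic preimage $u_K^{-1}(]U[_P) \cap V$ coincides with $]U'[_{P'} \cap V$. Combined with the fact that both sides of the displayed map are computed as overconvergent colimits along such strict neighborhoods, this should match the two sides and produce the desired isomorphism, essentially by reducing to \cite[Corollary 5.3.9]{lestum} applied to a suitably restricted frame morphism.

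The main obstacle is precisely this last step. Le Stum's Cartesian commutation result applies when $U' = U \times_Y Y'$, which is strictly stronger than our hypothesis $U' = U \times_X X'$ (the two coincide only if the original frame morphism were itself Cartesian). Bridging this gap requires using flatness of $u$ together with the localizing effect of $j_{X'}^{\dagger}$, so that the mismatch between $X \times_Y Y'$ and $X'$ is washed out after passing to strict neighborhoods of $]X'[_{P'}$. This is the point where the technical content of the proposition lives.
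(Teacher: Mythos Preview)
Your approach via the five-lemma on the two localization sequences is exactly the paper's strategy, and you have correctly isolated both the reduction to $\psi$ and the obstacle: \cite[Corollary 5.3.9]{lestum} requires $U' = U \times_Y Y'$, not merely $U' = U \times_X X'$.

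The paper resolves the gap you flag at the end precisely along the lines you suggest, but carries it out explicitly rather than leaving it heuristic. It introduces the intermediate scheme $X'' = X \times_Y Y'$ (so that $X' \hookrightarrow X''$ is an open immersion) and factors the canonical map through $\underline{\Gamma}^{\dagger}_{C''}\, j_{X''}^{\dagger} u_K^* E$ with $C'' = C \times_X X''$. For the first half one now has $U'' = U \times_Y Y'$ genuinely, so Corollary 5.3.9 applies directly and flatness of $u_K^*$ gives the isomorphism on kernels. For the second half one checks that $C' = C'' \times_{X''} X'$, hence $U' = U'' \cap X'$, whence $j_{X'}^{\dagger}\, j_{U''}^{\dagger} = j_{U'}^{\dagger}$; applying the exact functor $j_{X'}^{\dagger}$ to the kernel diagram then finishes. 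This is exactly the ``washing out by $j_{X'}^{\dagger}$'' you anticipated, made concrete.

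One minor point: your assertion that flatness of $u$ makes $u_K^*$ exact is not automatic on all of $]Y'[_{P'}$. Flatness of the frame morphism only guarantees that $u_K$ is flat on some strict neighbourhood $V'$ of $]X'[_{P'}$ (via \cite[Corollary 3.3.6]{lestum}); one then invokes \cite[Proposition 5.1.13]{lestum} to reduce to working over $V'$, where $u_K^*$ is genuinely exact. The paper makes this reduction explicit, and it is needed to get the short exact top row in your ladder.
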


\begin{proof}
We know by \cite[Corollary 3.3.6]{lestum} that $u_K$ is flat on some strict neighbourhood $V'$ of $]X'[_{P'}$ in $]Y'[_{P'}$. Therefore, if $j_{V'}$ denotes the inclusion, then the functor $(u_K \circ j_{V'})^*$ is exact. Applying this to the short exact sequence from \cite[Proposition 5.2.11]{lestum} yields
\begin{center}
\begin{tikzpicture}[descr/.style={fill=white,inner sep=2.5pt}]
\matrix(a)[matrix of math nodes, row sep=3em, column sep=2.5em, text height=1.5ex, text depth=0.25ex
]
{
0  & j_{V'}^{-1} u_K^* \underline{\Gamma}^{\dagger}_{C} E & j_{V'}^{-1} u_K^* E & j_{V'}^{-1} u_K^* j_U^{\dagger} E    & 0 \\
};

\path[->,font=\scriptsize] (a-1-1)
edge node[above]{}  (a-1-2);
\path[->,font=\scriptsize] (a-1-2)
edge node[above]{}  (a-1-3);
\path[->,font=\scriptsize] (a-1-3)
edge node[above]{}  (a-1-4);
\path[->,font=\scriptsize] (a-1-4)
edge node[above]{}  (a-1-5);
\end{tikzpicture}
\end{center}
By using the exactness of the functor $j_{X'}^{\dagger}$ together with \cite[Proposition 5.1.13]{lestum} we obtain the following short exact sequence, which is the same as applying $j_{X'}^{\dagger}$ to the top row of the diagram (\ref{diagram_map_on_kernels}):
\begin{center}
\begin{tikzpicture}[descr/.style={fill=white,inner sep=2.5pt}]
\matrix(a)[matrix of math nodes, row sep=3em, column sep=2.5em, text height=1.5ex, text depth=0.25ex
]
{
0  & j_{X'}^{\dagger} u_K^* \underline{\Gamma}^{\dagger}_{C} E & j_{X'}^{\dagger} u_K^* E & j_{X'}^{\dagger} u_K^* j_U^{\dagger} E    & 0 \\
};

\path[->,font=\scriptsize] (a-1-1)
edge node[above]{}  (a-1-2);
\path[->,font=\scriptsize] (a-1-2)
edge node[above]{}  (a-1-3);
\path[->,font=\scriptsize] (a-1-3)
edge node[above]{}  (a-1-4);
\path[->,font=\scriptsize] (a-1-4)
edge node[above]{}  (a-1-5);
\end{tikzpicture}
\end{center}
Therefore it is now sufficient to show that the map 
\begin{equation}
\label{supports_lemma1_map1}
j_{X'}^{\dagger} u_K^* j_U^{\dagger} E \xlongrightarrow{} j_{U'}^{\dagger} u_K^* E
\end{equation}
is an isomorphism. Here we have also used that the functor $j_{X'}^{\dagger}$ is exact, hence preserving kernels. The map (\ref{supports_lemma1_map1}) is obtained by applying $j_{X'}^{\dagger}$ to the map $u_K^* j_U^{\dagger} E \rightarrow j_{U'}^{\dagger} u_K^* E$ coming from the morphism of frames
\begin{center}
\begin{tikzpicture}[descr/.style={fill=white,inner sep=2.5pt}]
\matrix(a)[matrix of math nodes, row sep=3em, column sep=2.5em, text height=1.5ex, text depth=0.25ex
]
{
 U' & Y' & P' \\
 U & Y & P \\
};
\path[->,font=\scriptsize] (a-1-1)
edge node[above]{}  (a-1-2);
\path[->,font=\scriptsize] (a-1-2)
edge node[above]{}  (a-1-3);
\path[->,font=\scriptsize] (a-2-1)
edge node[above]{}  (a-2-2);
\path[->,font=\scriptsize] (a-2-2)
edge node[above]{}  (a-2-3);
\path[->,font=\scriptsize] (a-1-1)
edge node[right]{}  (a-2-1);
\path[->,font=\scriptsize] (a-1-2)
edge node[right]{}  (a-2-2);
\path[->,font=\scriptsize] (a-1-3)
edge node[right]{$u$}  (a-2-3);
\end{tikzpicture}
\end{center}
This morphism may be factored as 
\begin{center}
\begin{tikzpicture}[descr/.style={fill=white,inner sep=2.5pt}]
\matrix(a)[matrix of math nodes, row sep=3em, column sep=2.5em, text height=1.5ex, text depth=0.25ex
]
{
    U'	              \\
    U'' & Y' & P'\\
    U  & Y & P  \\
};
\path[->,font=\scriptsize] (a-2-1)
edge node[above]{}  (a-2-2);
\path[->,font=\scriptsize] (a-2-2)
edge node[above]{}  (a-2-3);
\path[->,font=\scriptsize] (a-3-1)
edge node[above]{}  (a-3-2);
\path[->,font=\scriptsize] (a-3-2)
edge node[above]{}  (a-3-3);
\path[->,font=\scriptsize] (a-2-2)
edge node[right]{}  (a-3-2);
\path[->,font=\scriptsize] (a-2-1)
edge node[left]{}  (a-3-1);
\path[->,font=\scriptsize] (a-1-1)
edge node[left]{}  (a-2-1);
\path[->,font=\scriptsize] (a-1-1)
edge node[right]{}  (a-2-2);
\path[->,font=\scriptsize] (a-2-3)
edge node[right]{$u$}  (a-3-3);
\end{tikzpicture}
\end{center}
where $U'' = U \times_Y Y'$ and where $U' \rightarrow U''$ is an open immersion. The map (\ref{supports_lemma1_map1}) is then the same thing as applying $j_{X'}^{\dagger}$ to the composition 
\[
u_K^* j_U^{\dagger} E \xlongrightarrow{} j_{U''}^{\dagger} u_K^* E \xlongrightarrow{} j_{U'}^{\dagger} u_K^* E .
\]
The first one of these arrows is an isomorphism by \cite[Corollary 5.3.9]{lestum}. Applying $j_{X'}^{\dagger}$ to the second arrow corresponds to the canonical map
\begin{equation}
\label{supports_lemma1_map2}
j^{\dagger}_{(X' \cap U'')} u_K^* E \xlongrightarrow{} j_{U'}^{\dagger} u_K^* E
\end{equation}
coming from the open immersion $U' \hookrightarrow X' \cap U''$. Note that both $X'$ and $U''$ are open subsets of $X'' = X \times_Y Y'$, which is itself an open subset of $Y'$. We may now write $U'' = X'' \setminus C''$, where $C'' = C \times_X X''$. But by assumption we have $C' = C \times_X X'$ and therefore $C' = C'' \times_{X''} X'$. This implies that 
\[
U'' \cap X' = U'' \times_{X''} X' = U' .
\]
It follows that the map (\ref{supports_lemma1_map2}) is an isomorphism, which finishes the proof.
\end{proof}

\subsection{Part I of the proof: Reformulation}
\label{paragraph_proof_0}

As we mentioned before, the key to proving our main theorem \ref{maintheorem} is to modify \cite[Proposition 6.5.3]{lestum}. The big difference is that in our setting we can only obtain a local result. More specifically, let $x' \in X'$ and $x \in X$ be closed points such that $(X', x') \succ (X, x)$ via some map $f \colon U_{x'} \rightarrow X$. Since the local cohomology at $x'$ only depends on an open neighbourhood of $x'$, we may assume that $U_{x'} = X'$ and that $f$ is \'{e}tale. What we need to show is that for such an $f$, the base change map with $C' = \{x'\}$ and $C = \{x\}$ is an isomorphism. See proposition \ref{theorem_base_change_main} below for the precise statement. If we assume that the morphism of frames in the statement of this proposition is a realization of $f$ and that $E$ is a realization of $\mathcal{F}$ then theorem \ref{maintheorem} follows immediately (c.f.\ the remarks at the end of paragraph \ref{paragraph_base_change}).

\begin{proposition}
\label{theorem_base_change_main}
Let 
\begin{center}
\begin{tikzpicture}[descr/.style={fill=white,inner sep=2.5pt}]
\matrix(a)[matrix of math nodes, row sep=3em, column sep=2.5em, text height=1.5ex, text depth=0.25ex
]
{
   X' & Y' & P' \\
   X  & Y  & P \\
};
\path[->,font=\scriptsize] (a-1-1)
edge node[above]{}  (a-1-2);
\path[->,font=\scriptsize] (a-1-2)
edge node[above]{}  (a-1-3);
\path[->,font=\scriptsize] (a-2-1)
edge node[above]{}  (a-2-2);
\path[->,font=\scriptsize] (a-2-2)
edge node[above]{}  (a-2-3);
\path[->,font=\scriptsize] (a-1-2)
edge node[right]{$g$}  (a-2-2);
\path[->,font=\scriptsize] (a-1-3)
edge node[right]{$u$}  (a-2-3);
\path[->,font=\scriptsize] (a-1-1)
edge node[right]{$f$}  (a-2-1);
\end{tikzpicture}
\end{center}
be a proper smooth morphism of smooth $S$-frames. Also assume that $f$ is \'{e}tale. Let $E$ be a coherent $j_X^{\dagger} \Op_{]Y[_P}$-module with an integrable connection over $K$. Choose two closed points $x' \in X'$ and $x \in X$ such that $f^{-1}(x) = \{x'\}$ and such that $f$ induces an isomorphism $k(x) \xlongrightarrow{\sim} k(x')$ on the residue fields. Then the base change map
\[
u \star \colon \underline{\Gamma}^{\dagger}_{\{x\}} E \otimes_{\Op_{]Y[_P}} \Omega^{\bullet}_{]Y[_P / K} \longrightarrow \left( \R u_{K *} \right) \underline{\Gamma}^{\dagger}_{\{x'\}} u^{\dagger} E \otimes_{\Op_{]Y'[_{P'}}} \Omega^{\bullet}_{]Y'[_{P'} / K}
\]
is an isomorphism.
\end{proposition}

The proof of proposition \ref{theorem_base_change_main} will be covered in the next two paragraphs.

\subsection{Part II of the proof: The quasi-compact \'{e}tale case}
\label{paragraph_proof_1}

The aim of this paragraph is to prove proposition \ref{theorem_base_change_main} in the case of an \'{e}tale morphism of frames such that the induced morphism on tubes $u_K \colon ]Y'[_{P'} \rightarrow ]Y[_P$ is quasi-compact. 

In the case of constant coefficients (i.e.\ $E = j_X^{\dagger} \Op_{]Y[_P}$) this fact is proved in the appendix of \cite{finitude}. Our proof for the general case uses similar techniques as in \cite[Proposition A.10]{finitude}, but introduces two technical improvements. Firstly there is the matter of switching the order of certain functors, which seems to be implicit in \cite{finitude}. We will briefly discuss the required properties below. Secondly, some extra care is needed to show that the constructed isomorphism is really the same as the canonical base change map (\ref{equation_base_change}). Indeed, the result in \cite[Proposition A.10]{finitude} is used to make statements about the \emph{dimension} of rigid cohomology. But for our applications the Frobenius-equivariance is very important.

\begin{proposition}
\label{theorem_base_change_etale}
Let 
\begin{center}
\begin{tikzpicture}[descr/.style={fill=white,inner sep=2.5pt}]
\matrix(a)[matrix of math nodes, row sep=3em, column sep=2.5em, text height=1.5ex, text depth=0.25ex
]
{
   X' & Y' & P' \\
   X  & Y  & P \\
};
\path[->,font=\scriptsize] (a-1-1)
edge node[above]{}  (a-1-2);
\path[->,font=\scriptsize] (a-1-2)
edge node[above]{}  (a-1-3);
\path[->,font=\scriptsize] (a-2-1)
edge node[above]{}  (a-2-2);
\path[->,font=\scriptsize] (a-2-2)
edge node[above]{}  (a-2-3);
\path[->,font=\scriptsize] (a-1-2)
edge node[right]{}  (a-2-2);
\path[->,font=\scriptsize] (a-1-3)
edge node[right]{$u$}  (a-2-3);
\path[->,font=\scriptsize] (a-1-1)
edge node[right]{$f$}  (a-2-1);
\end{tikzpicture}
\end{center}
be an \'{e}tale morphism of smooth $S$-frames such that the induced morphism on tubes $u_K \colon ]Y'[_{P'} \rightarrow ]Y[_P$ is quasi-compact. Choose two closed points $x' \in X'$ and $x \in X$ such that $f^{-1}(x) = \{x'\}$ and such that $f$ induces an isomorphism $k(x) \xlongrightarrow{\sim} k(x')$ on the residue fields. Let $E$ be a coherent $j_X^{\dagger} \Op_{]Y[_P}$-module with an integrable connection over $K$. Then the base change map
\[
u \star \colon \underline{\Gamma}^{\dagger}_{\{x\}} E \otimes_{\Op_{]Y[_P}} \Omega^{\bullet}_{]Y[_P / K} \longrightarrow \left( \R u_{K *} \right) \underline{\Gamma}^{\dagger}_{\{x'\}} u^{\dagger} E \otimes_{\Op_{]Y'[_{P'}}} \Omega^{\bullet}_{]Y'[_{P'} / K}
\]
is an isomorphism.
\end{proposition}

Before we can give the proof of this result, we will need to recall the definition of a certain modification of the functor $\underline{\Gamma}^{\dagger}_{C}$. This definition also appears in \cite{finitude}.

\begin{definition}
\label{definition_gamma_eta}
Consider a frame $(X \subset Y \subset P)$ and let $C \subset Y$ be a closed subset. Define $U = X \setminus C$ and $Z = Y \setminus U$. Then for any $\eta < 1$ we may consider the open immersion
\[
i_{\eta} \colon ]Y[_{P} \setminus ]Z[_{P \eta}  \xlongrightarrow{} ]Y[_P .
\]
As in \cite[A.9]{finitude}, we define
\[
\underline{\Gamma}_{\eta} E = \mbox{Ker}(E \xlongrightarrow{} i_{\eta *} i_{\eta}^{-1} E)
\]
for any $\Op_{]Y[_P}$-module $E$.
\end{definition}

We briefly recall some fundamental properties of the functor $\underline{\Gamma}_{\eta}$. Our formulations are slightly different from the results mentioned in \cite[A.9]{finitude}, because we specialize everything to overconvergent modules.

\begin{proposition}
\label{prop_functors_basic_1}
Use notations as in definition \ref{definition_gamma_eta}. Then there are canonical isomorphisms
\begin{equation}
\label{equation_limit_1}
\mathop {\varinjlim}_\eta \limits \, i_{\eta *} i_{\eta}^{-1} E \xlongrightarrow{\sim} j_{U}^{\dagger} E .
\end{equation}
and
\begin{equation}
\label{equation_limit_2}
\mathop {\varinjlim}_\eta \limits \, \underline{\Gamma}_{\eta} \, j_{X}^{\dagger} E \xlongrightarrow{\sim} \underline{\Gamma}_{C}^{\dagger} j_{X}^{\dagger} E.
\end{equation}
\end{proposition}

\begin{proof}
The isomorphism (\ref{equation_limit_1}) is proved in \cite[A.9]{finitude}. The isomorphism (\ref{equation_limit_2}) follows by combining (\ref{equation_limit_1}) with the short exact sequence from \cite[Proposition 5.2.11]{lestum}. Here one also uses that filtered colimits commute with finite limits.
\end{proof}

For the next property we focus on the case where the support $C$ is a closed point.

\begin{proposition}
\label{prop_functors_basic_2}
Use notations as in definition \ref{definition_gamma_eta}. Assume that $C = \{x\}$ is a closed point. Write $W = ]\{x\}[_{P}$ and let $i_W \colon W \hookrightarrow ]Y[_P$ denote the inclusion map. Then for any $\eta < 1$ and for any $\Op_{]Y[_P}$-module $E$, the base change map 
\begin{equation}
\label{equation_adj_unit}
\underline{\Gamma}_{\eta} \, j_X^{\dagger} E \longrightarrow \left( \mathbb{R} i_{W *} \right) i_W^{*} \underline{\Gamma}_{\eta} \, j_{X}^{\dagger} E
\end{equation}
that is associated to the diagram
\begin{center}
\begin{tikzpicture}[descr/.style={fill=white,inner sep=2.5pt}]
\matrix(a)[matrix of math nodes, row sep=3em, column sep=2.5em, text height=1.5ex, text depth=0.25ex
]
{
   W & ]Y[_P \\
   ]Y[_P  & ]Y[_P  \\
};
\path[->,font=\scriptsize] (a-1-1)
edge node[above]{$i_W$}  (a-1-2);
\path[->,font=\scriptsize] (a-2-1)
edge node[above]{$\mbox{Id}$}  (a-2-2);
\path[->,font=\scriptsize] (a-1-2)
edge node[right]{$\mbox{Id}$}  (a-2-2);
\path[->,font=\scriptsize] (a-1-1)
edge node[right]{$i_W$}  (a-2-1);
\end{tikzpicture}
\end{center}
is an isomorphism. 
\end{proposition}

\begin{proof}
Consider the immersion $\iota \colon ]Z[_P \rightarrow ]Y[_P$. It is proved in \cite[A.9]{finitude} that the canonical map
\[
\underline{\Gamma}_{\eta} \, j_X^{\dagger} E \longrightarrow \left( \mathbb{R} \iota_{*} \right) \iota^{*} \underline{\Gamma}_{\eta} \, j_{X}^{\dagger} E
\]
is an isomorphism. Therefore it suffices to show that there is an identification
\begin{equation}
\label{eq_lemma_identification}
\left( \mathbb{R} \iota_{*} \right) \iota^{*} \underline{\Gamma}_{\eta} \, j_{X}^{\dagger} E \cong \left( \mathbb{R} i_{W *} \right) i_W^{*} \underline{\Gamma}_{\eta} \, j_{X}^{\dagger} E .
\end{equation}
To see this, we use that $\{x\} \subset Y$ is a closed subset. This allows us to write, according to \cite[Proposition 2.2.15]{lestum}:
\[
]Z[_P \, = \, ]Y \setminus X[_{P} \, \, \cup \, \, ]\{x\}[_{P} \, = \, ]Y \setminus X[_{P} \, \, \cup \, \, W .
\]
But by construction, the restriction of $j_{X}^{\dagger} E$ to $]Y \setminus X[_{P}$ is zero. The identification (\ref{eq_lemma_identification}) follows.
\end{proof}

Next we investigate how the functor $\underline{\Gamma}_{\eta}$ behaves w.r.t.\ morphisms of frames.

\begin{definition}
\label{def_canmap_eta}
Consider a morphism of frames
\begin{center}
\begin{tikzpicture}[descr/.style={fill=white,inner sep=2.5pt}]
\matrix(a)[matrix of math nodes, row sep=3em, column sep=2.5em, text height=1.5ex, text depth=0.25ex
]
{
   X' & Y' & P' \\
   X  & Y  & P \\
};
\path[->,font=\scriptsize] (a-1-1)
edge node[above]{}  (a-1-2);
\path[->,font=\scriptsize] (a-1-2)
edge node[above]{}  (a-1-3);
\path[->,font=\scriptsize] (a-2-1)
edge node[above]{}  (a-2-2);
\path[->,font=\scriptsize] (a-2-2)
edge node[above]{}  (a-2-3);
\path[->,font=\scriptsize] (a-1-2)
edge node[right]{$g$}  (a-2-2);
\path[->,font=\scriptsize] (a-1-3)
edge node[right]{$u$}  (a-2-3);
\path[->,font=\scriptsize] (a-1-1)
edge node[right]{$f$}  (a-2-1);
\end{tikzpicture}
\end{center}
together with supports $C' \subset X'$ and $C \subset X$ such that $f^{-1}(C) \subset C'$. Define $U' = X' \setminus C'$ and $U = X \setminus C$, and note that $f$ restricts to $f \colon U' \rightarrow U$. Also consider closed complements $Z' = Y' \setminus U'$ and $Z = Y \setminus U$, which satisfy $g^{-1}(Z) \subset Z'$.

Then we have the following commutative diagram, for any $\eta < 1$:
\begin{center}
\begin{tikzpicture}[descr/.style={fill=white,inner sep=2.5pt}]
\matrix(a)[matrix of math nodes, row sep=3em, column sep=2.5em, text height=1.5ex, text depth=0.25ex
]
{
   ]Y'[_{P'} \setminus ]Z'[_{P' \eta} & ]Y'[_{P'} \\
   ]Y[_{P} \setminus ]Z[_{P \eta}  & ]Y[_P  \\
};
\path[->,font=\scriptsize] (a-1-1)
edge node[above]{$i'_{\eta}$}  (a-1-2);
\path[->,font=\scriptsize] (a-2-1)
edge node[above]{$i_{\eta}$}  (a-2-2);
\path[->,font=\scriptsize] (a-1-2)
edge node[right]{$u_K$}  (a-2-2);
\path[->,font=\scriptsize] (a-1-1)
edge node[right]{}  (a-2-1);
\end{tikzpicture}
\end{center}
For an $\Op_{]Y[_P}$-module $E$ we may then consider the (underived) base change map:
\[
u_K^{*} i_{\eta *} i_{\eta}^{-1} E \xlongrightarrow{} i'_{\eta *} u_K^{*} i_{\eta}^{-1} E = i'_{\eta *} (i'_{\eta})^{-1} u_K^{*} E .
\]
Then, using the left exactness of $j_{X'}^{\dagger}$, we obtain a commutative diagram whose top row is a complex and whose bottom row is exact:
\begin{equation}
\label{diagram_def_canmap_eta}
\begin{tikzpicture}[descr/.style={fill=white,inner sep=2.5pt}, baseline=(current  bounding  box.center)]
\matrix(a)[matrix of math nodes, row sep=3em, column sep=2.5em, text height=1.5ex, text depth=0.25ex
]
{
   & j_{X'}^{\dagger} u_K^{*} \underline{\Gamma}_{\eta} E & j_{X'}^{\dagger} u_K^{*} E       & j_{X'}^{\dagger} u_K^{*} i_{\eta *} i_{\eta}^{-1} E  \\
0  & j_{X'}^{\dagger} \underline{\Gamma}'_{\eta} u_K^{*} E & j_{X'}^{\dagger} u_K^{*} E & j_{X'}^{\dagger} i'_{\eta *} (i'_{\eta})^{-1} u_K^{*} E \\
};

\path[->,font=\scriptsize] (a-1-2)
edge node[above]{}  (a-1-3);
\path[->,font=\scriptsize] (a-1-3)
edge node[above]{}  (a-1-4);

\path[->,font=\scriptsize] (a-2-1)
edge node[above]{}  (a-2-2);
\path[->,font=\scriptsize] (a-2-2)
edge node[above]{}  (a-2-3);
\path[->,font=\scriptsize] (a-2-3)
edge node[above]{}  (a-2-4);

\path[->,font=\scriptsize] (a-1-3)
edge node[above]{}  (a-2-3);
\path[->,font=\scriptsize] (a-1-4)
edge node[above]{}  (a-2-4);
\end{tikzpicture}
\end{equation}
By the universal property of the kernel we now obtain a map
\begin{equation}
\label{eq_canmap_eta}
j_{X'}^{\dagger} u_K^{*} \underline{\Gamma}_{\eta} E \xlongrightarrow{} j_{X'}^{\dagger} \underline{\Gamma}'_{\eta} u_K^{*} E .
\end{equation}
Finally, by passing to the limit $\eta \rightarrow 1$, one obtains a map
\begin{equation}
\label{eq_canmap_eta_limit}
\varinjlim_{\eta} \, j_{X'}^{\dagger} u_K^{*} \underline{\Gamma}_{\eta} E \xlongrightarrow{} \varinjlim_{\eta} \, j_{X'}^{\dagger} \underline{\Gamma}'_{\eta} u_K^{*} E .
\end{equation}
\end{definition}

\begin{proposition}
\label{eta_lemma_1}
Use the notations from definition \ref{def_canmap_eta} and assume that the morphism of frames is flat. Then the top row of the diagram (\ref{diagram_def_canmap_eta}) is exact. Also, for any $\eta < 1$, the object $j_{X'}^{\dagger} u_K^{*} \underline{\Gamma}_{\eta} E$ is isomorphic to the kernel of the map
\[
j_{X'}^{\dagger} u_K^{*} E  \xlongrightarrow{} j_{X'}^{\dagger} u_K^{*} i_{\eta *} i_{\eta}^{-1} E .
\]
\end{proposition}

\begin{proof}
We know by \cite[Corollary 3.3.6]{lestum} that $u_K$ is flat on some strict neighbourhood $V'$ of $]X'[_{P'}$ in $]Y'[_{P'}$. Therefore, if $j_{V'}$ denotes the inclusion, then the functor $(u_K \circ j_{V'})^*$ is exact. This results in a short exact sequence
\begin{center}
\begin{tikzpicture}[descr/.style={fill=white,inner sep=2.5pt}]
\matrix(a)[matrix of math nodes, row sep=3em, column sep=2.5em, text height=1.5ex, text depth=0.25ex
]
{
0  & j_{V'}^{-1} u_K^{*} \underline{\Gamma}_{\eta} E & j_{V'}^{-1} u_K^{*} E       & j_{V'}^{-1} u_K^{*} i_{\eta *} i_{\eta}^{-1} E  \\
};

\path[->,font=\scriptsize] (a-1-1)
edge node[above]{}  (a-1-2);
\path[->,font=\scriptsize] (a-1-2)
edge node[above]{}  (a-1-3);
\path[->,font=\scriptsize] (a-1-3)
edge node[above]{}  (a-1-4);
\end{tikzpicture}
\end{center}
By applying the left exact functor $j_{X'}^{\dagger} j_{V' *}$ and using \cite[Proposition 5.1.13]{lestum}, we indeed find an exact sequence
\begin{center}
\begin{tikzpicture}[descr/.style={fill=white,inner sep=2.5pt}]
\matrix(a)[matrix of math nodes, row sep=3em, column sep=2.5em, text height=1.5ex, text depth=0.25ex
]
{
0  & j_{X'}^{\dagger} u_K^{*} \underline{\Gamma}_{\eta} E & j_{X'}^{\dagger} u_K^{*} E       & j_{X'}^{\dagger} u_K^{*} i_{\eta *} i_{\eta}^{-1} E  \\
};

\path[->,font=\scriptsize] (a-1-1)
edge node[above]{}  (a-1-2);
\path[->,font=\scriptsize] (a-1-2)
edge node[above]{}  (a-1-3);
\path[->,font=\scriptsize] (a-1-3)
edge node[above]{}  (a-1-4);
\end{tikzpicture}
\end{center}
This finishes the proof.
\end{proof}

The previous proposition is useful for the following lemma, which is one of the main technical ingredients in the proof of proposition \ref{prop_maintheorem_proof_prelim} below.

\begin{proposition}
\label{eta_lemma_2}
Use the notations from definition \ref{def_canmap_eta}. Assume that the morphism of frames is flat and that $E$ is a $j_{X}^{\dagger} \Op_{]Y[_P}$-module. Then the map (\ref{eq_canmap_eta_limit}) is isomorphic to the map (\ref{equation_can_map_2}).
\end{proposition}

\begin{proof}
By considering the constructions in the proofs of propositions \ref{eta_lemma_1} and \ref{supports_lemma1}, it is sufficient to show the following statement: The map
\[
\varinjlim_{\eta} \, j_{X'}^{\dagger} u_K^{*} i_{\eta *} i_{\eta}^{-1} E \xlongrightarrow{} \varinjlim_{\eta} \, j_{X'}^{\dagger} i'_{\eta *} (i'_{\eta})^{-1} u_K^{*} E
\]
coming from the rightmost vertical arrow of diagram (\ref{diagram_def_canmap_eta}) is the same thing as the map
\begin{equation}
\label{targetmap}
j_{X'}^{\dagger} u_K^* j_U^{\dagger} E \xlongrightarrow{} j_{U'}^{\dagger} u_K^* E
\end{equation}
coming from the rightmost vertical arrow of diagram (\ref{diagram_map_on_kernels}). 

On modules, the functor $j_{X'}^{\dagger}$ is left adjoint to the forgetful functor, according to \cite[Proposition 5.3.1]{lestum}. This means that $j_{X'}^{\dagger}$ preserves colimits, so that we are reduced to the map
\[
j_{X'}^{\dagger} \varinjlim_{\eta} \, u_K^{*} i_{\eta *} i_{\eta}^{-1} E \xlongrightarrow{} j_{X'}^{\dagger} \varinjlim_{\eta} \, i'_{\eta *} (i'_{\eta})^{-1} u_K^{*} E .
\]
The limit $\eta \rightarrow 1$ also slides through $u_K^{*}$, which is a left adjoint. By proposition \ref{prop_functors_basic_1} we indeed recover the map (\ref{targetmap}).
\end{proof}

In the remainder of this paragraph we will consider the map
\begin{equation}
\label{eq_finalmap}
j_{X'}^{\dagger} u_K^{*} \underline{\Gamma}_{\eta} E \xlongrightarrow{} j_{X'}^{\dagger} \underline{\Gamma}'_{\eta} u_K^{*} E \xlongrightarrow{} \underline{\Gamma}'_{\eta} \, j_{X'}^{\dagger} u_K^{*} E
\end{equation}
where the first arrow is the map (\ref{eq_canmap_eta}) and the second arrow is obtained by applying $j_{X'}^{\dagger} \underline{\Gamma}'_{\eta}$ to the map $u_K^{*} E \rightarrow j_{X'}^{\dagger} u_K^{*} E$. Here we also use that the module $\underline{\Gamma}'_{\eta} \, j_{X'}^{\dagger} u_K^{*} E$ is already overconvergent, so there is no need to write the $j_{X'}^{\dagger}$ in front of it. Using our earlier results we easily obtain the following property.
 
\begin{proposition}
\label{eta_lemma_3}
Again use the notations from definition \ref{def_canmap_eta}. Assume that the following conditions hold:
\begin{enumerate}[label=\roman*)]
\item
The given morphism of frames is flat.
\item
$E$ is a $j_{X}^{\dagger} \Op_{]Y[_P}$-module.
\item
The supports satisfy $f^{-1}(C) = C'$.
\end{enumerate}
Then the map (\ref{eq_finalmap}) becomes an isomorphism when passing to the limit $\eta \rightarrow 1$.
\end{proposition}

\begin{proof}
For the map (\ref{eq_canmap_eta_limit}) this follows immediately by combining propositions \ref{eta_lemma_2} and \ref{supports_lemma1}. Using the previous techniques, it is easy to show that 
\[
j_{X'}^{\dagger} \underline{\Gamma}'_{\eta} u_K^{*} E \xlongrightarrow{} \underline{\Gamma}'_{\eta} \, j_{X'}^{\dagger} u_K^{*} E
\]
becomes the identity on $\underline{\Gamma}^{\dagger}_{C'} j_{X'}^{\dagger} u_K^{*} E$ in the limit $\eta \rightarrow 1$.
\end{proof}

We now use the results about the functor $\underline{\Gamma}_{\eta}$ and the map (\ref{eq_finalmap}) to prove a weak version of proposition \ref{theorem_base_change_etale}.

\begin{proposition}
\label{prop_maintheorem_proof_prelim}
Consider an \'{e}tale morphism of frames
\begin{center}
\begin{tikzpicture}[descr/.style={fill=white,inner sep=2.5pt}]
\matrix(a)[matrix of math nodes, row sep=3em, column sep=2.5em, text height=1.5ex, text depth=0.25ex
]
{
   X' & Y' & P' \\
   X  & Y  & P \\
};
\path[->,font=\scriptsize] (a-1-1)
edge node[above]{}  (a-1-2);
\path[->,font=\scriptsize] (a-1-2)
edge node[above]{}  (a-1-3);
\path[->,font=\scriptsize] (a-2-1)
edge node[above]{}  (a-2-2);
\path[->,font=\scriptsize] (a-2-2)
edge node[above]{}  (a-2-3);
\path[->,font=\scriptsize] (a-1-2)
edge node[right]{}  (a-2-2);
\path[->,font=\scriptsize] (a-1-3)
edge node[right]{$u$}  (a-2-3);
\path[->,font=\scriptsize] (a-1-1)
edge node[right]{$f$}  (a-2-1);
\end{tikzpicture}
\end{center}
such that the induced morphism on tubes $u_K \colon ]Y'[_{P'} \rightarrow ]Y[_P$ is quasi-compact. Choose two closed points $x' \in X'$ and $x \in X$ such that $f^{-1}(x) = \{x'\}$ and such that $f$ induces an isomorphism $k(x) \xlongrightarrow{\sim} k(x')$ on the residue fields. Let $E$ be a coherent $j_X^{\dagger} \Op_{]Y[_P}$-module. Then the canonical map 
\begin{equation}
\label{equation_theorem_base_change_etale_1}
\underline{\Gamma}^{\dagger}_{\{x\}} E \longrightarrow \left( \mathbb{R} u_{K *} \right) u_K^* \underline{\Gamma}^{\dagger}_{\{x\}} E \longrightarrow \left( \mathbb{R} u_{K *} \right) u^{\dagger} \underline{\Gamma}^{\dagger}_{\{x\}} E
\end{equation}
that is defined in a similar way as the composition $(u \star)_2 \circ (u \star)_1$ from paragraph \ref{paragraph_base_change} is an isomorphism. 
\end{proposition}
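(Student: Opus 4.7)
My plan is to localize everything to the tubes $W := \,]\{x\}[_P$ and $W' := \,]\{x'\}[_{P'}$, and then to use the étale hypothesis on $u$ to conclude that the restriction $v \colon W' \to W$ of $u_K$ is an isomorphism; once $v$ is an isomorphism, the claim is essentially formal.

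First I would apply the isomorphism (\ref{equation_limit_2}) to write $\underline{\Gamma}^{\dagger}_{\{x\}} E$ as the filtered colimit over $\eta$ of the approximations $\underline{\Gamma}^{\dagger}_{\{x\}, \eta} j_{X, \eta}^{\dagger} E$, and perform the analogous rewriting on the right-hand side. Because $u_K$ is quasi-compact and quasi-separated, $\mathbb{R} u_{K *}$ commutes with filtered colimits, and the compatibilities of proposition \ref{prop_functors_morphisms}, in particular the diagram (\ref{equation_prop_functors_morphisms_3}) of property iv), then reduce the claim to showing that a canonical map of the same shape as (\ref{equation_theorem_base_change_etale_1}), but built out of the functors $j_{X, \eta}^{\dagger}$ and $\underline{\Gamma}^{\dagger}_{\cdot, \eta}$, is an isomorphism at every $\eta < 1$.

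At such an $\eta$, property ii) of proposition \ref{prop_functors_basic} identifies $\underline{\Gamma}^{\dagger}_{\{x\}, \eta} j_{X, \eta}^{\dagger} E$ with $(\mathbb{R} i_{W *}) i_W^{*} \underline{\Gamma}^{\dagger}_{\{x\}, \eta} j_{X, \eta}^{\dagger} E$, and there is a parallel identification on $\,]Y'[_{P'}$. Combined with $(\mathbb{R} u_{K *}) (\mathbb{R} i_{W' *}) \cong (\mathbb{R} i_{W *}) (\mathbb{R} v_{*})$, arising from $u_K \circ i_{W'} = i_W \circ v$, the task reduces to checking that the adjunction unit
\[
i_W^{*} \underline{\Gamma}^{\dagger}_{\{x\}, \eta} j_{X, \eta}^{\dagger} E \longrightarrow (\mathbb{R} v_{*}) \, v^{*} \, i_W^{*} \underline{\Gamma}^{\dagger}_{\{x\}, \eta} j_{X, \eta}^{\dagger} E
\]
is an isomorphism on $W$, which is automatic once $v$ is an isomorphism.

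The geometric core of the argument is exactly this last point. Since $u$ is étale in a neighborhood of $X'$ it is étale at $x'$; combined with $u(x') = x$ and the residue-field isomorphism $k(x) \xlongrightarrow{\sim} k(x')$, this forces $\widehat{\Op}_{P, x} \xlongrightarrow{\sim} \widehat{\Op}_{P', x'}$ to be an isomorphism, which translates into the isomorphism $v \colon W' \xlongrightarrow{\sim} W$ on tubes. The main obstacle will not be this geometric input but the bookkeeping that precedes it: one must verify that, after passing to the $\eta$-approximations and restricting to $W$, the two-step composition defining (\ref{equation_theorem_base_change_etale_1}) really coincides with the adjunction unit of $v^{*} \dashv v_{*}$. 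The compatibility diagram of property iv) of proposition \ref{prop_functors_morphisms}, together with the construction of (\ref{equation_adj_unit}) in property ii) of proposition \ref{prop_functors_basic}, should make this bookkeeping tractable.
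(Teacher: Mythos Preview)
Your proposal is correct and follows essentially the same route as the paper: the geometric core is that $v \colon W' \to W$ is an isomorphism (the paper cites \cite[Proposition 2.3.15]{lestum} for this), and the technical work is the bookkeeping needed to identify the $\eta$-approximation of (\ref{equation_theorem_base_change_etale_1}) with the map obtained from the adjunction unit of $v$ after applying $\mathbb{R} i_{W*}$. The paper executes exactly this bookkeeping via a chain of explicit commutative diagrams (maps labelled $a_i$, $b_i$, $c_i$, $d_i$), invoking \cite[Proposition XII.4.4]{sga} for the compatibility of composed base-change maps, and then passes to the colimit over $\eta$ using quasi-compactness of $u_K$ just as you propose.
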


\begin{proof}
Define $W' = ]\{x'\}[_{P'}$ and $W = ]\{x\}[_P$. Then we have a commutative diagram
\begin{center}
\begin{tikzpicture}[descr/.style={fill=white,inner sep=2.5pt}]
\matrix(a)[matrix of math nodes, row sep=3em, column sep=2.5em, text height=1.5ex, text depth=0.25ex
]
{
   W' & ]Y'[_{P'} \\
   W  & ]Y[_P  \\
};
\path[->,font=\scriptsize] (a-1-1)
edge node[above]{$i_{W'}$}  (a-1-2);
\path[->,font=\scriptsize] (a-2-1)
edge node[above]{$i_{W}$}  (a-2-2);
\path[->,font=\scriptsize] (a-1-2)
edge node[right]{$u_K$}  (a-2-2);
\path[->,font=\scriptsize] (a-1-1)
edge node[right]{$v$}  (a-2-1);
\end{tikzpicture}
\end{center}
where $v$ is the restriction of $u_K$ and $i_{W'}$, $i_W$ are open immersions. The fact that $f$ induces an isomorphism $k(x) \xlongrightarrow{\sim} k(x')$ on residue fields means that the restriction $f \colon \Spec k(x') \rightarrow \Spec k(x)$ is an isomorphism. Since $u_K$ is \'{e}tale in a neighbourhood of $x'$ it then follows from \cite[Proposition 2.3.15]{lestum} that $v$ is an isomorphism.

We now apply a standard property about the behaviour of the base change map (\ref{equation_basic_bcm}) with respect to a composition of diagrams. This property is formulated for schemes in \cite[Proposition XII.4.4]{sga}. The proof is formally the same for any ringed space. We will apply this composition property to the diagram 
\begin{equation}
\label{equation_diagram_1}
\begin{tikzpicture}[descr/.style={fill=white,inner sep=2.5pt}, baseline=(current  bounding  box.center)]
\matrix(a)[matrix of math nodes, row sep=3em, column sep=2.5em, text height=1.5ex, text depth=0.25ex
]
{
   W' & ]Y'[_{P'} & ]Y[_P \\
   W  & ]Y[_P  & ]Y[_P \\
};
\path[->,font=\scriptsize] (a-1-1)
edge node[above]{$i_{W'}$}  (a-1-2);
\path[->,font=\scriptsize] (a-1-2)
edge node[above]{$u_K$}  (a-1-3);
\path[->,font=\scriptsize] (a-2-1)
edge node[above]{$i_W$}  (a-2-2);
\path[->,font=\scriptsize] (a-2-2)
edge node[above]{$\mbox{Id}$}  (a-2-3);
\path[->,font=\scriptsize] (a-1-2)
edge node[right]{$u_K$}  (a-2-2);
\path[->,font=\scriptsize] (a-1-3)
edge node[right]{$\mbox{Id}$}  (a-2-3);
\path[->,font=\scriptsize] (a-1-1)
edge node[right]{$v$}  (a-2-1);
\end{tikzpicture}
\end{equation}
and to the sheaf $i_W^* \underline{\Gamma}_{\eta} E$. By making use of the isomorphism from proposition \ref{prop_functors_basic_2} we obtain a commutative diagram
\begin{center}
\begin{tikzpicture}[descr/.style={fill=white,inner sep=2.5pt}]
\matrix(a)[matrix of math nodes, row sep=3em, column sep=2.5em, text height=1.5ex, text depth=0.25ex
]
{
   \underline{\Gamma}_{\eta} E & \left( \mathbb{R} u_{K *} \right) u_K^* \underline{\Gamma}_{\eta} E \\
     & \left( \mathbb{R} u_{K *} i_{W' *} \right) v^* i_W^* \underline{\Gamma}_{\eta} E \\
};
\path[->,font=\scriptsize] (a-1-1)
edge node[above]{$a_1$}  (a-1-2);
\path[->,font=\scriptsize] (a-1-2)
edge node[right]{$a_2$}  (a-2-2);
\path[->,font=\scriptsize] (a-1-1)
edge node[below]{$a_3$}  (a-2-2);
\end{tikzpicture}
\end{center}
The horizontal arrow $a_1$ is the base change map coming from the rightmost square of (\ref{equation_diagram_1}), applied to the sheaf $\left( \mathbb{R} i_{W *} \right) i_W^* \underline{\Gamma}_{\eta} E$. The map $a_2$ is obtained by applying $\mathbb{R} u_{K *}$ to the base change map coming from the leftmost square of (\ref{equation_diagram_1}). By using \cite[Proposition XII.4.4]{sga} again we see that $a_2$ is equal to the morphism that one gets after applying $\mathbb{R} u_{K *}$ to the canonical morphism 
\[
u_K^* \underline{\Gamma}_{\eta} E \longrightarrow \left( \mathbb{R} i_{W' *} \right) i_{W'}^* u_K^* \underline{\Gamma}_{\eta} E.
\]
The arrow $a_3$ is obtained by taking the base change map of the total diagram (\ref{equation_diagram_1}). Note that $a_3$ is equal to the morphism that one gets by applying $\mathbb{R} i_{W *}$ to the canonical map
\[
i_W^* \underline{\Gamma}_{\eta} E \longrightarrow \left( \mathbb{R} v_* \right) v^* i_W^* \underline{\Gamma}_{\eta} E.
\]
Since $v$ is an isomorphism it follows that $a_3$ is an isomorphism. Let us now look at the diagram
\begin{center}
\begin{tikzpicture}[descr/.style={fill=white,inner sep=2.5pt}]
\matrix(a)[matrix of math nodes, row sep=3em, column sep=2.5em, text height=1.5ex, text depth=0.25ex
]
{
   \left( \mathbb{R} u_{K *} \right) u_K^* \underline{\Gamma}_{\eta} E & \left( \mathbb{R} u_{K *} \right) j_{X'}^{\dagger} u_K^* \underline{\Gamma}_{\eta} E \\
   \left( \mathbb{R} u_{K *} i_{W' *} \right) i_{W'}^* u_K^* \underline{\Gamma}_{\eta} E & \left( \mathbb{R} u_{K *} i_{W' *} \right) i_{W'}^* j_{X'}^{\dagger} u_K^* \underline{\Gamma}_{\eta} E \\
};
\path[->,font=\scriptsize] (a-1-1)
edge node[above]{$b_1$}  (a-1-2);
\path[->,font=\scriptsize] (a-1-2)
edge node[right]{$b_2$}  (a-2-2);
\path[->,font=\scriptsize] (a-2-1)
edge node[below]{$b_3$}  (a-2-2);
\path[->,font=\scriptsize] (a-1-1)
edge node[right]{$a_2$}  (a-2-1);
\end{tikzpicture}
\end{center}
that is obtained by applying the canonical map
\[
\mbox{Id} \xlongrightarrow{} \left( \mathbb{R} i_{W' *} \right) i_{W'}^{*}
\]
to the morphism 
\[
u_K^* \underline{\Gamma}_{\eta} E \xlongrightarrow{} j_{X'}^{\dagger} u_K^* \underline{\Gamma}_{\eta} E
\]
and then composing with $\mathbb{R} u_{K *}$. 

Note that the map $b_3$ is an isomorphism. This follows from the characterization \cite[Proposition 5.1.12]{lestum} of the functor $j^{\dagger}_{X'}$. Indeed, $i_{W'}^*$ is a left adjoint, hence preserving filtered colimits. If $V'$ is any strict neighbourhood of $]X'[_{P'}$ in $]Y'[_{P'}$, then $W' \subset V'$, so that $i_{W'}^* j_{V' *} j_{V'}^{-1} = i_{W'}^*$. This shows that $i_{W'}^* j^{\dagger}_{X'} = i_{W'}^*$.

In a similar way we construct another diagram 
\begin{center}
\begin{tikzpicture}[descr/.style={fill=white,inner sep=2.5pt}]
\matrix(a)[matrix of math nodes, row sep=3em, column sep=2.5em, text height=1.5ex, text depth=0.25ex
]
{
   \left( \mathbb{R} u_{K *} \right) j_{X'}^{\dagger} u_K^* \underline{\Gamma}_{\eta} E  & \left( \mathbb{R} u_{K *} \right) \underline{\Gamma}'_{\eta} \, j_{X'}^{\dagger} u_K^{*} E \\
   \left( \mathbb{R} u_{K *} i_{W' *} \right) i_{W'}^* j_{X'}^{\dagger} u_K^* \underline{\Gamma}_{\eta} E & \left( \mathbb{R} u_{K *} i_{W' *} \right) i_{W'}^* \underline{\Gamma}'_{\eta} \, j_{X'}^{\dagger} u_K^{*} E \\
};
\path[->,font=\scriptsize] (a-1-1)
edge node[above]{$c_1$}  (a-1-2);
\path[->,font=\scriptsize] (a-1-2)
edge node[right]{$c_2$}  (a-2-2);
\path[->,font=\scriptsize] (a-2-1)
edge node[below]{$c_3$}  (a-2-2);
\path[->,font=\scriptsize] (a-1-1)
edge node[right]{$b_2$}  (a-2-1);
\end{tikzpicture}
\end{center}
using the morphism (\ref{eq_finalmap}). Observe that the map $c_2$ is an isomorphism, according to proposition \ref{prop_functors_basic_2}.

We now have the identity
\begin{equation}
\label{eqdecomp}
c_2 \circ c_1 \circ b_1 \circ a_1 = c_3 \circ b_3 \circ a_3
\end{equation}
and we have shown that $a_3$, $b_3$ and $c_2$ are isomorphisms. 

At this point, we fix an $m \geq 0$ and we consider the $m$-th cohomology of all the complexes above. It is sufficient to consider these modules, since the statement of the proposition is about a complex concentrated in degree zero. By abuse of notation, we still write the maps as $a_1, b_1, \ldots$. 

Note that (the $m$-th cohomology of) the map (\ref{equation_theorem_base_change_etale_1}) is recovered by taking the limit $\eta \rightarrow 1$ of the composition $b_1 \circ a_1$. To see this, first observe that $\varinjlim_{\eta}$ commutes with $\R^{m} u_{K *}$, since $u_K$ is assumed quasi-compact. See \cite[0.1.8]{berthelot} for details.  Also, $\varinjlim_{\eta}$ commutes with the functors $u_K^*$ and $j^{\dagger}_{X'}$, since these are left adjoints.

We have proved in proposition \ref{eta_lemma_3} that $c_1$ becomes an isomorphism in the limit $\eta \rightarrow 1$. We now show that $c_3$ has the same property. First note that $i_{W'}$ is quasi-Stein, since $W'$ is the tube of a closed subset of $Y'$. It follows that $\mathbb{R}^n i_{W' *} \mathcal{M} = 0$ for all $n > 0$ and for any coherent $\Op_{W'}$-module $\mathcal{M}$. Using the assumption that $E$ is coherent, one sees that $c_3$ is the same as the morphism
\[
\left( \mathbb{R}^m u_{K *}  \right) i_{W' *} i_{W'}^* j_{X'}^{\dagger} u_K^* \underline{\Gamma}_{\eta} E \xlongrightarrow{} \left( \mathbb{R}^m u_{K *} \right) i_{W' *}  i_{W'}^* \underline{\Gamma}'_{\eta} \, j_{X'}^{\dagger} u_K^{*} E .
\]
It now suffices to show that the limit $\eta \rightarrow 1$ commutes with $i_{W' *}$. But this follows immediately from the definitions, since $i_{W'}$ is the inclusion of an admissible open subset.

Once we know that $c_3$ is an isomorphism in the limit, it follows immediately that also $b_1 \circ a_1$ becomes an isomorphism for $m=0$ and $\eta \rightarrow 1$.

Similarly, for $m > 0$ we know that 
\[
\left( \mathbb{R}^m u_{K *} \right) i_{W' *} i_{W'}^* u_K^* \underline{\Gamma}_{\eta} E = 0. 
\]
But in the limit $\eta \rightarrow 1$ this module is isomorphic to
\[
\varinjlim_{\eta} \, \left( \mathbb{R}^m u_{K *} \right) j_{X'}^{\dagger} u_K^* \underline{\Gamma}_{\eta} E = \left( \mathbb{R}^m u_{K *} \right) j_{X'}^{\dagger} u_K^* \underline{\Gamma}^{\dagger}_{\{x\}} E ,
\]
which must also be zero. This completes the proof.
\end{proof}

With proposition \ref{prop_maintheorem_proof_prelim} in place the proof of proposition \ref{theorem_base_change_etale} becomes relatively straightforward.

\begin{proof}[Proof of Proposition \ref{theorem_base_change_etale}]
As we did in paragraph \ref{paragraph_base_change}, we divide the base change map $u \star$ into several parts $(u \star)_i$ for $i = 1, \ldots, 4$. It follows from proposition \ref{supports_lemma1} that $(u \star)_4$ is an isomorphism. By \cite[Corollary 3.3.6]{lestum} we know that the map $u_K \colon ]Y'[_{P'} \rightarrow ]Y[_P$ is \'{e}tale on some strict neighbourhood $V'$ of $]X'[_{P'}$ in $]Y'[_{P'}$. It then follows from \cite[Proposition 5.3.7]{lestum} that $(u \star)_3$ is an isomorphism if and only if the corresponding map for the restriction $u_K \colon V' \rightarrow ]Y[_P$ is an isomorphism. Therefore we may work as if $u_K$ were \'{e}tale. Since our frames are smooth, we may also work as if $]Y'[_{P'}$ and $]Y[_P$ were smooth. In this situation the canonical map $u_K^* \Omega^1_{]Y[_P} \rightarrow \Omega^1_{]Y'[_{P'}}$ is an isomorphism. It immediately follows that $(u \star)_3$ is an isomorphism as well. 

It remains to show that the composition $(u \star)_2 \circ (u \star)_1$ is an isomorphism. For this we can use proposition \ref{prop_maintheorem_proof_prelim}. Indeed, proposition \ref{prop_maintheorem_proof_prelim} is equivalent to saying that for every coherent $j_X^{\dagger} \Op_{]Y[_P}$-module $\mathcal{E}$, the map 
\begin{equation}
\label{equation_basechange_1}
\underline{\Gamma}^{\dagger}_{\{x\}} \mathcal{E} \longrightarrow u_{K *} u^{\dagger} \underline{\Gamma}^{\dagger}_{\{x\}} \mathcal{E} 
\end{equation}
is an isomorphism and
\begin{equation}
\label{equation_basechange_2}
\left( \mathbb{R}^m u_{K *} \right) u^{\dagger} \underline{\Gamma}^{\dagger}_{\{x\}} \mathcal{E} = 0
\end{equation}
for all $m > 0$. Note that by Proposition 5.3.2 and Corollary 5.3.3 in \cite{lestum}, every term of the de Rham complex $\underline{\Gamma}^{\dagger}_{\{x\}} E \otimes_{\Op_{]Y[_P}} \Omega^{\bullet}_{]Y[_P / K}$ satisfies the conditions of proposition \ref{prop_maintheorem_proof_prelim}. By using the spectral sequence of hypercohomology
\begin{equation*}
\begin{split}
E_1^{m, n} = \left( \mathbb{R}^m u_{K *} \right) u^{\dagger} \mathopen{}\left( \underline{\Gamma}^{\dagger}_{\{x\}} E \right.\mathclose{} & \mathopen{}\left. \otimes_{\Op_{]Y[_P}} \Omega^{n}_{]Y[_P / K} \right)\mathclose{} \Longrightarrow  \\
& \left( \mathbb{R}^{m + n} u_{K *} \right) u^{\dagger} \left( \underline{\Gamma}^{\dagger}_{\{x\}} E \otimes_{\Op_{]Y[_P}} \Omega^{\bullet}_{]Y[_P / K} \right)
\end{split}
\end{equation*}
we deduce from (\ref{equation_basechange_2}) that 
\[
\left( \mathbb{R} u_{K *} \right) u^{\dagger} \left( \underline{\Gamma}^{\dagger}_{\{x\}} E \otimes_{\Op_{]Y[_P}} \Omega^{\bullet}_{]Y[_P / K} \right) = u_{K *} u^{\dagger} \left( \underline{\Gamma}^{\dagger}_{\{x\}} E \otimes_{\Op_{]Y[_P}} \Omega^{\bullet}_{]Y[_P / K} \right).
\]
This means that $(u \star)_2 \circ (u \star)_1$ is equal to the canonical map
\begin{equation}
\label{equation_basechange_3}
\underline{\Gamma}^{\dagger}_{\{x\}} E \otimes_{\Op_{]Y[_P}} \Omega^{\bullet}_{]Y[_P / K} \longrightarrow u_{K *} u^{\dagger} \left( \underline{\Gamma}^{\dagger}_{\{x\}} E \otimes_{\Op_{]Y[_P}} \Omega^{\bullet}_{]Y[_P / K} \right).
\end{equation}
But this map can be computed  term by term. From the fact that (\ref{equation_basechange_1}) is an isomorphism it then follows that  (\ref{equation_basechange_3}) is an isomorphism as well. We have now shown that the maps $(u \star)_4$, $(u \star)_3$ and $(u \star)_2 \circ (u \star)_1$ are isomorphisms. This finishes the proof.
\end{proof}

\begin{remark}
We have shown that the composition $(u \star)_2 \circ (u \star)_1$ is an isomorphism. However, the individual maps $(u \star)_2$ and $(u \star)_1$ need not be isomorphisms. For this reason it was convenient to make definition \ref{definition_base_change} slightly different from the definition in \cite{lestum}.
\end{remark}

\subsection{Part III of the proof: The general case}
\label{paragraph_proof_2}

In this paragraph we finish the proof of proposition \ref{theorem_base_change_main}. First we improve proposition \ref{theorem_base_change_etale} by removing the condition that the induced map on tubes $u_K \colon ]Y'[_{P'} \rightarrow ]Y[_P$ should be quasi-compact. 

\begin{proposition}
\label{theorem_base_change_etale_2}
Let 
\begin{center}
\begin{tikzpicture}[descr/.style={fill=white,inner sep=2.5pt}]
\matrix(a)[matrix of math nodes, row sep=3em, column sep=2.5em, text height=1.5ex, text depth=0.25ex
]
{
   X' & Y' & P' \\
   X  & Y  & P \\
};
\path[->,font=\scriptsize] (a-1-1)
edge node[above]{}  (a-1-2);
\path[->,font=\scriptsize] (a-1-2)
edge node[above]{}  (a-1-3);
\path[->,font=\scriptsize] (a-2-1)
edge node[above]{}  (a-2-2);
\path[->,font=\scriptsize] (a-2-2)
edge node[above]{}  (a-2-3);
\path[->,font=\scriptsize] (a-1-2)
edge node[right]{}  (a-2-2);
\path[->,font=\scriptsize] (a-1-3)
edge node[right]{$u$}  (a-2-3);
\path[->,font=\scriptsize] (a-1-1)
edge node[right]{$f$}  (a-2-1);
\end{tikzpicture}
\end{center}
be an \'{e}tale morphism of smooth $S$-frames. Also assume that $f$ is \'{e}tale. Let $E$ be a coherent $j_X^{\dagger} \Op_{]Y[_P}$-module with an integrable connection over $K$. Choose two closed points $x' \in X'$ and $x \in X$ such that $f^{-1}(x) = \{x'\}$ and such that $f$ defines an isomorphism $k(x) \xlongrightarrow{\sim} k(x')$ on the residue fields. Then the base change map
\[
u \star \colon \underline{\Gamma}^{\dagger}_{\{x\}} E \otimes_{\Op_{]Y[_P}} \Omega^{\bullet}_{]Y[_P / K} \longrightarrow \left( \R u_{K *} \right) \underline{\Gamma}^{\dagger}_{\{x'\}} u^{\dagger} E \otimes_{\Op_{]Y'[_{P'}}} \Omega^{\bullet}_{]Y'[_{P'} / K}
\]
is an isomorphism.
\end{proposition}

\begin{proof}
Let $Q' \subset P'$ be an open neighbourhood of $X'$ such that the restriction $u_{\mid Q'}$ is \'{e}tale. Then define $X'' = X \times_P Q'$ and $Y'' = Y \times_P P'$. We claim that the canonical map $X' \rightarrow Y''$ is an open immersion. Since this morphism factors through $X''$ and since it is easy to see that the canonical map $X'' \rightarrow Y''$ is an open immersion, it suffices to show that the canonical morphism $\alpha \colon X' \rightarrow X''$ is an open immersion. It is clear that $\alpha$ is an immersion. Now consider the projection morphism $\beta \colon X'' \rightarrow X$. Since $f = \beta \circ \alpha$ and since $\beta$ is \'{e}tale by construction, it follows that $\alpha$ is \'{e}tale as well. This shows that $\alpha$ is indeed an open immersion, proving our claim. The fact that $X' \rightarrow Y''$ is an open immersion allows us to factor our morphism of frames as follows:
\begin{center}
\begin{tikzpicture}[descr/.style={fill=white,inner sep=2.5pt}]
\matrix(a)[matrix of math nodes, row sep=3em, column sep=2.5em, text height=1.5ex, text depth=0.25ex
]
{
   & Y' &  \\
 X' & Y'' & P' \\
 X  & Y  & P  \\
};
\path[->,font=\scriptsize] (a-2-1)
edge node[above]{}  (a-2-2);
\path[->,font=\scriptsize] (a-2-1)
edge node[right]{}  (a-3-1);
\path[->,font=\scriptsize] (a-2-2)
edge node[above]{}  (a-2-3);
\path[->,font=\scriptsize] (a-3-2)
edge node[above]{}  (a-3-3);
\path[->,font=\scriptsize] (a-1-2)
edge node[right]{}  (a-2-2);
\path[->,font=\scriptsize] (a-2-2)
edge node[right]{}  (a-3-2);
\path[->,font=\scriptsize] (a-2-3)
edge node[right]{$u$}  (a-3-3);
\path[->,font=\scriptsize] (a-3-1)
edge node[right]{}  (a-3-2);
\path[->,font=\scriptsize] (a-1-2)
edge node[above]{}  (a-2-3);
\path[->,font=\scriptsize] (a-2-1)
edge node[above]{}  (a-1-2);
\end{tikzpicture}
\end{center}
Recall that we assumed the formal schemes $P'$ and $P$ to be topologically of finite type over $\mathcal{V}$. It follows that these formal schemes are Noetherian topological spaces, since their closed fibers are of finite type over $k$. In particular we see that $u$ is quasi-compact. Therefore the morphism $u_K \colon P'_K \rightarrow P_K$ on the generic fibers is quasi-compact as well. Since the rightmost square in the diagram above is Cartesian we find that $u_K^{-1} \left( ]Y[_{P} \right) = ]Y''[_{P'}$ according to \cite[Proposition 2.2.6]{lestum}. It follows that the induced morphism on tubes $u_K \colon ]Y''[_{P'} \rightarrow ]Y[_P$ is quasi-compact. This allows us to apply proposition \ref{theorem_base_change_etale} to the lower part of the diagram. Now observe that the morphism $Y' \rightarrow Y''$ is a closed immersion, hence proper. So according to \cite[Proposition 6.5.3]{lestum}, the base change map that is associated to the upper part of the diagram is an isomorphism as well.
\end{proof}

The key idea for the proof of proposition \ref{theorem_base_change_main} is to show that an \'{e}tale map $f \colon X' \rightarrow X$ has an \'{e}tale realization, at least after shrinking $X'$ and $X$. In this way one reduces the problem to proposition \ref{theorem_base_change_etale_2}. The proof of this fact relies on a number of geometric results that we discuss below.

\begin{proposition}
\label{reductions}
~\begin{enumerate}[label=\roman*)]
\item
Consider a proper morphism of frames
\begin{center}
\begin{tikzpicture}[descr/.style={fill=white,inner sep=2.5pt}]
\matrix(a)[matrix of math nodes, row sep=3em, column sep=2.5em, text height=1.5ex, text depth=0.25ex
]
{
 X' & Y' & P' \\
 X  & Y & P \\
};
\path[->,font=\scriptsize] (a-1-1)
edge node[above]{}  (a-1-2);
\path[->,font=\scriptsize] (a-1-2)
edge node[above]{}  (a-1-3);
\path[->,font=\scriptsize] (a-1-1)
edge node[right]{$f$}  (a-2-1);
\path[->,font=\scriptsize] (a-2-2)
edge node[above]{}  (a-2-3);
\path[->,font=\scriptsize] (a-1-2)
edge node[right]{}  (a-2-2);
\path[->,font=\scriptsize] (a-1-3)
edge node[right]{}  (a-2-3);
\path[->,font=\scriptsize] (a-2-1)
edge node[right]{}  (a-2-2);
\end{tikzpicture}
\end{center}
where $f$ is quasi-projective. Then we can blow up a closed subvariety of $Y'$ outside $X'$ in $P'$ to obtain a diagram
\begin{center}
\begin{tikzpicture}[descr/.style={fill=white,inner sep=2.5pt}]
\matrix(a)[matrix of math nodes, row sep=3em, column sep=2.5em, text height=1.5ex, text depth=0.25ex
]
{
   & \widetilde{Y'} & \widetilde{P'} \\
 X' & Y' & P' \\
 X  & Y  & P  \\
};
\path[->,font=\scriptsize] (a-2-1)
edge node[above]{}  (a-1-2);
\path[->,font=\scriptsize] (a-2-1)
edge node[above]{}  (a-2-2);
\path[->,font=\scriptsize] (a-2-1)
edge node[right]{$f$}  (a-3-1);
\path[->,font=\scriptsize] (a-1-2)
edge node[above]{}  (a-1-3);
\path[->,font=\scriptsize] (a-2-2)
edge node[above]{}  (a-2-3);
\path[->,font=\scriptsize] (a-3-2)
edge node[above]{}  (a-3-3);
\path[->,font=\scriptsize] (a-1-2)
edge node[right]{}  (a-2-2);
\path[->,font=\scriptsize] (a-1-3)
edge node[right]{}  (a-2-3);
\path[->,font=\scriptsize] (a-2-2)
edge node[right]{}  (a-3-2);
\path[->,font=\scriptsize] (a-2-3)
edge node[right]{}  (a-3-3);
\path[->,font=\scriptsize] (a-3-1)
edge node[right]{}  (a-3-2);
\end{tikzpicture}
\end{center}
where the composition $\widetilde{Y'} \rightarrow Y$ is projective.
\item
Consider a strict morphism of frames 
\begin{center}
\begin{tikzpicture}[descr/.style={fill=white,inner sep=2.5pt}]
\matrix(a)[matrix of math nodes, row sep=3em, column sep=2.5em, text height=1.5ex, text depth=0.25ex
]
{
 X' & Y' & P' \\
 X & Y & P \\
};
\path[->,font=\scriptsize] (a-1-1)
edge node[above]{}  (a-1-2);
\path[->,font=\scriptsize] (a-1-2)
edge node[above]{}  (a-1-3);
\path[->,font=\scriptsize] (a-2-1)
edge node[above]{}  (a-2-2);
\path[->,font=\scriptsize] (a-2-2)
edge node[above]{}  (a-2-3);
\path[->,font=\scriptsize] (a-1-1)
edge node[right]{}  (a-2-1);
\path[->,font=\scriptsize] (a-1-2)
edge node[right]{}  (a-2-2);
\path[->,font=\scriptsize] (a-1-3)
edge node[right]{$u$}  (a-2-3);
\end{tikzpicture}
\end{center}
where $u$ is a formal blowing up. Then the map $u_K \colon ]Y'[_{P'} \rightarrow ]Y[_P$ is an isomorphism. Moreover, any admissible open neighbourhood $V$ of $]Y[_P$ is a strict neighbourhood of $]X[_P$ in $]Y[_P$ if and only if $u_K^{-1}(V)$ is a strict neighbourhood of $]X'[_{P'}$ in $]Y'[_{P'}$. That is, giving a $j_{X}^{\dagger} \Op_{]Y[_P}$-module amounts to the same thing as giving a $j_{X'}^{\dagger} \Op_{]Y'[_{P'}}$-module.
\item
Consider a frame $(X \subset Y \subset P)$ together with a diagram 
\begin{center}
\begin{tikzpicture}[descr/.style={fill=white,inner sep=2.5pt}]
\matrix(a)[matrix of math nodes, row sep=3em, column sep=2.5em, text height=1.5ex, text depth=0.25ex
]
{
 X' & Y' \\
 X  & Y  & P\\
};
\path[->,font=\scriptsize] (a-1-1)
edge node[above]{}  (a-1-2);
\path[->,font=\scriptsize] (a-2-2)
edge node[above]{}  (a-2-3);
\path[->,font=\scriptsize] (a-1-1)
edge node[right]{$f$}  (a-2-1);
\path[->,font=\scriptsize] (a-2-1)
edge node[above]{}  (a-2-2);
\path[->,font=\scriptsize] (a-1-2)
edge node[right]{$g$}  (a-2-2);
\end{tikzpicture}
\end{center}
where the map $X' \rightarrow Y'$ is an open immersion, $f$ is an \'{e}tale morphism and $g$ is projective. Then locally on $(X \subset Y \subset P)$, there exists a closed subscheme $Y'' \subset Y'$ containing $X'$ such that the map $g_{\mid Y''}$ extends to a proper \'{e}tale morphism of frames
\begin{center}
\begin{tikzpicture}[descr/.style={fill=white,inner sep=2.5pt}]
\matrix(a)[matrix of math nodes, row sep=3em, column sep=2.5em, text height=1.5ex, text depth=0.25ex
]
{
 X' & Y'' & P' \\
 X & Y & P \\
};
\path[->,font=\scriptsize] (a-1-1)
edge node[above]{}  (a-1-2);
\path[->,font=\scriptsize] (a-1-2)
edge node[above]{}  (a-1-3);
\path[->,font=\scriptsize] (a-2-1)
edge node[above]{}  (a-2-2);
\path[->,font=\scriptsize] (a-2-2)
edge node[above]{}  (a-2-3);
\path[->,font=\scriptsize] (a-1-1)
edge node[right]{$f$}  (a-2-1);
\path[->,font=\scriptsize] (a-1-2)
edge node[right]{$g$}  (a-2-2);
\path[->,font=\scriptsize] (a-1-3)
edge node[right]{$u$}  (a-2-3);
\end{tikzpicture}
\end{center}
\end{enumerate}
\end{proposition}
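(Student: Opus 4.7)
The plan is to treat the three parts separately, each appealing to a different standard tool.

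For part i), the strategy is to apply a relative version of Chow's lemma. The proper morphism $Y' \to Y$ restricts over $X$ to $f$, which is quasi-projective by hypothesis. A standard relative Chow's lemma produces a projective birational morphism $\widetilde{Y'} \to Y'$ that is an isomorphism over $X'$ (so that $\widetilde{Y'}$ still contains $X'$ as an open subscheme) and such that the composition $\widetilde{Y'} \to Y$ is projective. Moreover, $\widetilde{Y'} \to Y'$ is given as the blow-up of a coherent ideal sheaf $\mathcal{I}$ on $Y'$ whose zero locus is disjoint from $X'$. To promote this to the formal setting, one lifts $\mathcal{I}$ to an open ideal on $P'$ supported outside $X'$, and defines $\widetilde{P'}$ as the corresponding admissible formal blow-up; the scheme $\widetilde{Y'}$ is then recovered as the strict transform of $Y'$ in $\widetilde{P'}$.

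For part ii), the statement is a standard property of admissible formal blow-ups from Raynaud's theory: such a blow-up induces an isomorphism on rigid generic fibers. Restricting this isomorphism $P'_K \xlongrightarrow{\sim} P_K$ to the tubes of $Y'$ and $Y$ yields the desired isomorphism $u_K \colon ]Y'[_{P'} \xlongrightarrow{\sim} ]Y[_P$. The equivalence of strict neighborhoods then follows from the fact that strictness is characterized in terms of admissible covers, and that $u_K^{-1}$ (as an isomorphism of rigid spaces) preserves admissibility. Consequently, $j_X^{\dagger}\Op_{]Y[_P}$-modules and $j_{X'}^{\dagger}\Op_{]Y'[_{P'}}$-modules correspond bijectively.

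Part iii) is the most substantive step and is where the main obstacle lies. The idea is to use the lci hypothesis on $f$ to cut $Y'$ down to a subscheme that becomes étale over $Y$. Locally on $X'$, the lci property yields a factorization $X' \hookrightarrow Z \to X$ with the first map a regular closed immersion and the second smooth. Using the projectivity of $g$ to realize a neighborhood of $X'$ in $Y'$ inside a projective bundle over $Y$, the regular immersion cutting out $X'$ inside $Z$ is realized by the vanishing of a regular sequence of sections on a local model for $Y'$ over $Y$. We then take $Y''$ to be the vanishing locus of these equations inside an open of $Y'$ containing $X'$: this produces the required closed subscheme, which contains $X'$ by construction and whose structural map to $Y$ is étale near $X'$. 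To extend to a full frame morphism, one lifts the cutting equations to admissible sections on a neighborhood of $X$ in $P$ and defines $P'$ as their zero locus; the étaleness of $u \colon P' \to P$ in a neighborhood of $X'$ then follows from the Jacobian criterion, since the regular sequence chosen was picked precisely so as to have invertible Jacobian at $X'$. The properness of $Y'' \to Y$ is automatic, as closed subschemes of projective schemes are projective.

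The main obstacle will be executing part iii) rigorously: one has to ensure that the scheme-level local equations can be lifted simultaneously to admissible sections on the formal model $P$ and that the resulting étale lift $P'$ is genuinely a formal scheme topologically of finite type over $\mathcal{V}$ whose closed fiber contains $Y''$. Parts i) and ii) are essentially classical once one invokes the standard formalism of admissible formal blow-ups.
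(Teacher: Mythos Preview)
Your approach is essentially the paper's, and parts i) and ii) are handled correctly: the paper simply cites a relative Chow lemma for i) and two standard results from Le Stum for ii), exactly along the lines you sketch.

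For part iii), however, your construction of $P'$ is garbled in a way that would make it fail as written. You propose to lift the cutting equations ``to admissible sections on a neighbourhood of $X$ in $P$'' and to define $P'$ as their zero locus. But a zero locus inside $P$ gives a closed immersion $P' \hookrightarrow P$, and a closed immersion is never \'etale unless it is an isomorphism onto a component; so $u \colon P' \to P$ could not be \'etale near $X'$ in any nontrivial way. The missing ingredient is that the ambient space for the lift is not $P$ but a projective space over $P$. The paper's construction makes this clean: since $g$ is projective, the composite $Y' \to Y \to P$ factors through a closed immersion $Y' \hookrightarrow \mathbb{P}^N_P$ into projective space over the \emph{formal} scheme $P$. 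The crucial point is then that the induced map
\[
i \colon X' \longrightarrow \mathbb{P}^N_P \times_P X
\]
is a regular immersion, which follows because the projection $\mathbb{P}^N_P \times_P X \to X$ is smooth and $f$ is lci. Once you know $i$ is regular, you lift the regular sequence cutting out $X'$ in $\mathbb{P}^N_P \times_P X$ to local sections on $\mathbb{P}^N_P$, and $P'$ is their vanishing locus inside (an open of) $\mathbb{P}^N_P$; the map $u$ is then the restriction of the smooth projection $\mathbb{P}^N_P \to P$, and a dimension count (or the Jacobian criterion, as you say) gives \'etaleness near $X'$. This is precisely the argument of \cite[Lemma 6.5.1]{lestum}, which the paper invokes.

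So your idea for iii) is right in spirit, but you have to carry the projective embedding all the way up to the formal level from the outset: embed in $\mathbb{P}^N_P$, not merely in a projective bundle over $Y$, and cut $P'$ out of $\mathbb{P}^N_P$, not out of $P$.
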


\begin{proof}
~\begin{enumerate}[label=\roman*)]
\item
Apply \cite[Corollaire 5.7.14]{chow} to the morphism $Y' \rightarrow Y$ and the open subset $X' \subset Y'$, which is quasi-projective over $Y$.
\item
This follows from \cite[Corollary 2.2.7]{lestum} and \cite[Proposition 3.1.13]{lestum}.
\item
The composition $Y' \rightarrow Y \rightarrow P$ can be factored through a closed immersion $Y' \rightarrow \Proj^N_P$ for some $N$. It now suffices to show that the morphism $i$ in the diagram below is a regular immersion. The rest of the proof is analogous to \cite[Lemma 6.5.1]{lestum}.
\begin{center}
\begin{tikzpicture}[descr/.style={fill=white,inner sep=2.5pt}]
\matrix(a)[matrix of math nodes, row sep=3em, column sep=2.5em, text height=1.5ex, text depth=0.25ex
]
{
 X' & & \\
 & \Proj^N_P \times_P X & \Proj^N_P \\
 & X & P \\
};
\path[->,font=\scriptsize] (a-2-2)
edge node[above]{}  (a-2-3);
\path[->,font=\scriptsize] (a-3-2)
edge node[below]{}  (a-3-3);
\path[->,font=\scriptsize] (a-2-2)
edge node[right]{}  (a-3-2);
\path[->,font=\scriptsize] (a-2-3)
edge node[right]{}  (a-3-3);
\path[->,font=\scriptsize] (a-1-1)
edge node[above]{$i$}  (a-2-2);
\path[->,font=\scriptsize] (a-1-1)
edge[bend left=15] node[above]{}  (a-2-3);
\path[->,font=\scriptsize] (a-1-1)
edge[bend right=15] node[left]{$f$}  (a-3-2);
\end{tikzpicture}
\end{center}
First note that $i$ is an immersion, since the morphisms $X' \rightarrow \Proj^N_P$ and $\Proj^N_P \times_P X \rightarrow \Proj^N_P$ are immersions. Also, $\Proj^N_P \times_P X \rightarrow X$ is smooth since it is obtained by base extension from a smooth morphism. Since $f$ is a local complete intersection morphism it follows from \cite[Corollary 6.3.22]{liu} that $i$ is indeed regular.
\end{enumerate}
\end{proof}

With all the preliminary work, the proof of proposition \ref{theorem_base_change_main} becomes very similar to the proof of \cite[Proposition 6.5.3]{lestum}. 

\begin{proof}[Proof of Proposition \ref{theorem_base_change_main}]
First note that we can always replace $X'$ and $X$ by open neighbourhoods $U_{x'}$, $U_x$ of $x'$ resp.\ of $x$ such that $f(U_{x'}) \subset U_x$. Indeed, the base change map coming from the diagram
\begin{center}
\begin{tikzpicture}[descr/.style={fill=white,inner sep=2.5pt}]
\matrix(a)[matrix of math nodes, row sep=3em, column sep=2.5em, text height=1.5ex, text depth=0.25ex
]
{
 U_{x} &  &  \\
 X & Y & P \\
};
\path[->,font=\scriptsize] (a-1-1)
edge node[above]{}  (a-2-2);
\path[->,font=\scriptsize] (a-2-1)
edge node[above]{}  (a-2-2);
\path[->,font=\scriptsize] (a-2-2)
edge node[above]{}  (a-2-3);
\path[->,font=\scriptsize] (a-1-1)
edge node[right]{}  (a-2-1);
\end{tikzpicture}
\end{center}
is simply the canonical map $\underline{\Gamma}_{\{x\}}^{\dagger} \rightarrow \underline{\Gamma}_{\{x\}}^{\dagger} j_{U_x}^{\dagger}$ applied to the de Rham complex of $E$. This is an isomorphism by \cite[Proposition 5.2.12]{lestum}. A similar argument holds for the inclusion $U_{x'} \hookrightarrow X'$ and the $j_{X'}^{\dagger} \Op_{]Y'[_{P'}}$-module $u^{\dagger} E$. We may also replace $Y'$ by a closed subscheme that contains $X'$. By \cite[Proposition 6.5.3]{lestum} this does not alter the base change map either. We will refer to a combination of these two operations as a \emph{shrinking} of the data. The fact that a shrinking of the data does not alter the base change map can be used to reduce the problem to the case where $f$ has an \'{e}tale realization. Indeed, after replacing $X'$ and $X$ by open neighbourhoods of $x'$ resp.\ of $x$ we may assume that $f$ is an affine morphism, hence quasi-projective. By the first two points of proposition \ref{reductions} we then reduce to the case where $g$ is projective. After some more shrinking of $X'$ and $Y'$ we may use the third point of proposition \ref{reductions} to obtain an \'{e}tale morphism of frames
\begin{center}
\begin{tikzpicture}[descr/.style={fill=white,inner sep=2.5pt}]
\matrix(a)[matrix of math nodes, row sep=3em, column sep=2.5em, text height=1.5ex, text depth=0.25ex
]
{
   X' & Y' & P'' \\
   X  & Y  & P \\
};
\path[->,font=\scriptsize] (a-1-1)
edge node[above]{}  (a-1-2);
\path[->,font=\scriptsize] (a-1-2)
edge node[above]{}  (a-1-3);
\path[->,font=\scriptsize] (a-2-1)
edge node[above]{}  (a-2-2);
\path[->,font=\scriptsize] (a-2-2)
edge node[above]{}  (a-2-3);
\path[->,font=\scriptsize] (a-1-2)
edge node[right]{$g$}  (a-2-2);
\path[->,font=\scriptsize] (a-1-3)
edge node[right]{$v$}  (a-2-3);
\path[->,font=\scriptsize] (a-1-1)
edge node[right]{$f$}  (a-2-1);
\end{tikzpicture}
\end{center} 
Now consider the diagonal embedding $Y' \hookrightarrow P''' = P' \times_P P''$ and let $p_1 \colon P''' \rightarrow P'$ and $p_2 \colon P''' \rightarrow P''$ denote the projection maps. By construction we have that
\begin{equation}
\label{eq_commute}
u \circ p_1 = v \circ p_2.
\end{equation}
Also, $p_1$ and $p_2$ are smooth since they are obtained by base extension from $v$ resp.\ from $u$. By the identity (\ref{eq_commute}) it is now sufficient to prove that the base change maps that are associated to $v$ and to the diagrams
\begin{center}
\begin{tikzpicture}[descr/.style={fill=white,inner sep=2.5pt}]
\matrix(a)[matrix of math nodes, row sep=3em, column sep=2.5em, text height=1.5ex, text depth=0.25ex
]
{
      &    & P'''	     \\
   X' & Y' &             \\
      &    & P', P'' \\
};
\path[->,font=\scriptsize] (a-2-1)
edge node[above]{}  (a-2-2);
\path[->,font=\scriptsize] (a-2-2)
edge node[above]{}  (a-1-3);
\path[->,font=\scriptsize] (a-2-2)
edge node[above]{}  (a-3-3);
\path[->,font=\scriptsize] (a-1-3)
edge node[right]{$p_1, p_2$}  (a-3-3);
\end{tikzpicture}
\end{center} 
are isomorphisms. In proposition \ref{theorem_base_change_etale_2} we have already proved that the base change map for the \'{e}tale morphism of frames $v$ is an isomorphism. For the two morphisms of frames $p_1$ and $p_2$ it follows directly from \cite[Proposition 6.5.3]{lestum}. 
\end{proof}

\singlespacing
\bibliographystyle{bib_custom}
\bibliography{refs}

\end{document}